\newtheorem{theorem}{Theorem}[section]
\newtheorem{conjecture}[theorem]{Conjecture}
\newtheorem{proposition}[theorem]{Proposition}
\newtheorem{corollary}[theorem]{Corollary}
\newtheorem{lemma}[theorem]{Lemma}
\newtheorem{definition}[theorem]{Definition}
\newtheorem{example}[theorem]{Example}
\newtheorem{remark}[theorem]{Remark}
\newtheorem{question}[theorem]{Question}
\newcommand*{\rom}[1]{\expandafter\@slowromancap\romannumeral #1@}
\begin{document}

\title{Cohomological localization for Hamiltonian $S^1$-actions and symmetries of complete intersections}

\author{Nicholas Lindsay}

\maketitle

\begin{abstract} To begin the paper we revisit a cohomological localization result of Jones-Rawnsley which was subsequently improved by Farber, further generalizing the result. We then proceed to improve a previous result of the author on complete intersections of dimension $8k$ with a Hamiltonian $S^1$-action in two directions. Firstly, in dimension $8$ we remove the assumption on the fixed point set. Secondly, in any dimension we prove the result under an analogous assumption on the fixed point set. We also give some applications towards the unimodality of Betti numbers of symplectic manifolds having a Hamiltonian $S^1$-action, and discuss the relation to symplectic rationality problems.
\end{abstract}

\section{Introduction}

This paper is concerned with the topology of closed symplectic manifolds having a Hamiltonian $S^1$-action.  A driving question in symplectic geometry is to understand the extent to which the topology of closed symplectic manifolds differ from that of their
 K\"{a}hler counterparts, of which complex projective varieties are a fundamental
 source of examples. In general the difference is substantial. For instance, the fundamental group of a closed K\"{a}hler manifold satisfies non-trivial restrictions, the most basic being that the abelianization has even rank.  On the other hand every finitely presented group appears as the fundamental group of a closed symplectic $4$-manifold \cite[Theorem 1.1]{Go}. The equivariant version of the problem is an interesting refinement since the
 presence of symmetries may eliminate the difference between the categories. Key examples of such results are given by Karshon's classification of closed symplectic $4$-manifolds with a Hamiltonian $S^1$-action \cite{Ka}, and Tolman's classification of closed symplectic $6$-manifolds having a Hamiltonian $S^1$-action and $b_{2}=1$ \cite{T}. Here classification has a slightly different meaning in each case, we refer the reader to the original papers for the precise results. 

 We start by revisiting a result by Jones and Rawnsley \cite[Theorem 1]{JR}, subsequently improved by Farber \cite[Corollary 6]{F}, about the signature and Betti numbers of a closed symplectic manifold having a Hamiltonian $S^1$-action. These results fit into the above discussion, because they generalize results that were well-known in the complex projective category to the symplectic category under the assumptions of symmetry. See the footnote at the beginning of Section \ref{localization} for a precise statement.

After recalling some preliminaries in Section \ref{prel}, we begin Section \ref{localization} by proving Proposition \ref{general}, a localization formula for the following topological invariant: $$I_{JR}(X) := \sigma(X) - \sum_{i \in \mathbb{Z}} (b_{4i}(X) - b_{4i+2}(X) ). $$ Proposition \ref{general} states that $I_{JR}$ localizes as a signed sum over the fixed point set in a similar way to the signature. This puts the earlier results of \cite[Theorem 1]{JR}, \cite[Corollary 6]{F} into a natural context and generalizes them. The proof of Proposition \ref{general} combines the localization formula for the signature (see Theorem \ref{locsig}), the localization formula for the Betti numbers (see Theorem \ref{locbet}), and  Poincar\'{e} duality, along with some mild arithmetic simplifications. 

In the next part of the article we use Proposition \ref{general} often in combination with other techniques to prove some results which probe the relation between the K\"{a}hler and symplectic categories in the presence of a Hamiltonian $S^1$-action. The main thread is a collection of results regarding complete intersections which we now describe in more detail.

\textbf{Symplectic torus actions on complete intersections.} Subsection \ref{proving} and Section \ref{arbitrary} are dedicated to the following problem.

\begin{question} \label{ques}
Let $(M,\omega)$ be a closed symplectic manifold diffeomorphic to a complete intersection. Under which conditions does $(M,\omega)$ admit a Hamiltonian $S^1$-action?
\end{question}

 To summarize the results relevant to Question \ref{ques}, we set the following notation: Let $X_{m}(d_{1},\ldots,d_{k})$ denote a smooth complete intersection in $\mathbb{CP}^{k+m}$ with multidegree  $(d_{1},\ldots,d_{k})$. Here $m$ and $k$ are the the complex dimension and codimension respectively. For the definition of complete intersection see Definition \ref{cidef}, for more background material about complete intersections see Subsection \ref{completeint}.

 Since complete intersections of dimension at least $4$ are simply connected, there is no distinction between Hamiltonian and symplectic $S^1$-actions. For the sake of brevity, we keep the understanding that a complete intersection $(M,\omega)$ is a closed symplectic manifold diffeomorphic to a complete intersection. For some results appearing later in the article, some more restrictions on the symplectic form are needed and these are stated explicitly. 

Note that in dimension $2$, $S^2$ and $T^2$ have symplectic circle actions with isolated fixed points which are Hamiltonian and non-Hamiltonian respectively, and smooth orientable surfaces of genus at least $2$ do not admit non-trivial smooth actions of $S^1$, this explains the inclusion $X_{1}(1) \cong X_{1}(2)\cong S^2 ,X_{1}(3) \cong X_{1}(2,2) \cong T^2$ in the results below, and the exclusion of all other complete intersection curves. With this understanding, we work with complete intersections of dimension at least four where we may assume by simply connectedness that a symplectic action is Hamiltonian.

The major motivating result of this paper is a theorem of Benoist  \cite[Theorem 3.1]{B}, which is a complete characterization of smooth complete intersections with infinite holomorphic automorphism groups. A comprehensive smooth analogue  in real dimension $6$ was given by Dessai and Wiemeler \cite{DW}. One of the main aims of this article is to obtain a symplectic analogue of this theorem, specifically Theorem \ref{compin}, Corollary \ref{completeiso} and Theorem \ref{fs}. 

The first main result is to remove the technical assumption on the fixed point set in our previous result on complete intersections \cite[Theorem 1.1]{L} in dimension 8. 

\begin{theorem} \label{compin}
Suppose that $(M,\omega)$ is an $8$-dimensional complete intersection with a symplectic $S^1$-action. Then $M$ is diffeomorphic to one of $X_{4}(1),X_{4}(2)$ or $X_{4}(2,2)$.
\end{theorem}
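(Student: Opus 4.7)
My strategy combines the localization formula of Proposition \ref{general} with explicit invariants of complete intersections, and then closes the gap in \cite[Theorem 1.1]{L} by a finer analysis of the fixed-point configuration.

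Since an $8$-dimensional complete intersection $M = X_{4}(d_{1},\ldots,d_{k})$ is simply connected, the Lefschetz hyperplane theorem gives $b_{0}=b_{2}=b_{6}=b_{8}=1$ and $b_{\mathrm{odd}}=0$, so the only unknown Betti number is $b_{4}$. Consequently the invariant $I_{JR}(M)$ from Proposition \ref{general} reduces to a simple combination of $\sigma(M)$ and $b_{4}(M)$, and both are explicit polynomials in the multidegree via the Hirzebruch formulas for the Chern classes of a smooth complete intersection. In particular $|I_{JR}(M)|$ grows without bound as $\sum d_{i} \to \infty$.

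Next, simple connectedness promotes the symplectic $S^{1}$-action to a Hamiltonian one, so $M^{S^{1}}$ is non-empty and each component is a closed symplectic submanifold. Proposition \ref{general} expresses $I_{JR}(M)$ as a signed sum over the components of $M^{S^{1}}$, with contributions controlled by the topology of each component and the isotropy weights on its normal bundle. Combined with the localization of each individual Betti number in Theorem \ref{locbet}, this bounds $|I_{JR}(M)|$ by combinatorial data of $M^{S^{1}}$. Thus only finitely many multidegrees are admissible, and a direct check of small cases singles out $(1), (2), (2,2)$ together with a short list of candidates that need to be excluded.

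The main obstacle, and the reason \cite[Theorem 1.1]{L} imposed an auxiliary hypothesis, is handling positive-dimensional fixed components (of dimension $4$ or $6$) when eliminating these remaining candidates. To remove the hypothesis I would exploit $b_{2}(M) = 1$ to sharply constrain the Morse-theoretic profile of the moment map: the extremal fixed components carry most of the cohomology of $M$, and intermediate components are forced into narrow possibilities. Integrality of equivariant characteristic numbers, together with further applications of Theorems \ref{locsig} and \ref{locbet}, should then rule out every remaining configuration. I expect the hardest step to be excluding a $6$-dimensional fixed component, for which I would adapt techniques from Tolman's classification \cite{T} of $6$-dimensional Hamiltonian $S^{1}$-manifolds with $b_{2}=1$ to the present ambient setting. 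Once the multidegree is pinned to one of $(1), (2), (2,2)$, the three listed complete intersections are distinguished pairwise by their Euler characteristics and signatures, yielding the diffeomorphism conclusion.
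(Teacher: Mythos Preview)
Your proposal has a genuine gap and misidentifies the hard case.

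First, Proposition~\ref{general} already tells you that only fixed components of positive dimension divisible by $4$ contribute to $I_{JR}(M)$; in dimension $8$ this means only $4$-dimensional components matter, and $6$-dimensional components contribute nothing. So there is no issue of ``excluding a $6$-dimensional fixed component'' at all. By Lemma~\ref{dlemma} each $4$-dimensional $F$ contributes $(-1)^{\lambda_{F}}(2b_{2}^{+}(F)-2)$, so the entire problem is to show $b_{2}^{+}(F)=1$ for every such $F$. When $\lambda_{F}\in\{0,2\}$, Theorem~\ref{locbet} and $b_{2}(M)=1$ force $b_{2}(F)=1$, which is easy. The hard case is the non-extremal one $\lambda_{F}=1$, not the $6$-dimensional case you flagged.

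Second, your plan to ``bound $|I_{JR}(M)|$ via localization and then sieve multidegrees'' is circular. The right-hand side of Proposition~\ref{general} involves $b_{2}^{+}(F)$ for the $\lambda_{F}=1$ components, and there is no a~priori bound on this: Theorem~\ref{locbet} only ties $b_{2}(F)$ back to $b_{4}(M)$, which is precisely the quantity that grows with the multidegree. So localization alone does not produce a uniform bound on $|I_{JR}(M)|$ independent of the multidegree. The paper avoids this by proving $I_{JR}(M)=0$ \emph{exactly} (equivalently, the intersection form is positive definite), and then invoking Libgober--Wood (Theorem~\ref{lw}) directly, with no case-by-case sieving of multidegrees.

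The ingredient your outline is missing is how to obtain $b_{2}^{+}(F)=1$ in the $\lambda_{F}=1$ case. After using Lemma~\ref{DH Lemma} and isotropy $6$-manifolds to reduce to normal weights $\{-1,1\}$, the paper applies the monotone normalization (Lemma~\ref{symfano}, Theorem~\ref{wsf}) and a Duistermaat--Heckman argument (Lemmas~\ref{limzero} and \ref{realblowup}) to show that $[\omega]|_{F}$ is a positive multiple of $c_{1}(F)$; then Ohta--Ono (Corollary~\ref{oocor}) gives $b_{2}^{+}(F)=1$. ``Integrality of equivariant characteristic numbers'' does not substitute for this step.
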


In fact we will deduce this result from the more general Theorem \ref{main} which states that a closed symplectic $8$-manifold $M$ having a Hamiltonian $S^1$-action and satisfying $b_{2}(M)=1$ has positive definite intersection form. The proof is given in Subsection \ref{proving}. The first step of the proof is to note that the condition $I_{JR}(M)=0$ is equivalent to the intersection form being positive definite. Then, since $\dim(M)=8$ all the terms of the right hand side of Equation (\ref{equa}) of Proposition \ref{general} are of the form $\pm I_{JR}(F)$ where $F$ is a $4$-dimensional fixed component. The proof is concluded by proving that $I_{JR}(F) =0$ for all $4$-dimensional fixed components. This is proven in a different way depending on whether $F$ is extremal or not, and whether $F$ is contained in an isotropy $6$-manifold. The most difficult case when $F$ is non-extremal and not contained in an isotropy $6$-manifold, the desired condition is proved using the Duistermaat-Heckman theorem and a result from \cite{OO}.

 Theorem \ref{main} may be considered as a result regarding a certain class of monotone symplectic $8$-manifolds with a Hamiltonian $S^1$-action, see Subsection \ref{proving} for more precise details. There has been a lot of recent progress towards classifying monotone symplectic manifolds with Hamiltonian symmetries, see \cite{C2,CK3,GVS,LP1,SS} and the references contained therein.

In Section \ref{arbitrary}, we prove a classification result for symplectic $S^1$-actions on complete intersections under a condition on the fixed point set analogous to \cite[Theorem 1.1]{L}, therefore removing the assumption on the dimension in \textit{op. cit}.

\begin{theorem} \label{completeiso}
Suppose that $(M,\omega)$ is a complete intersection admitting a symplectic $S^1$-action with no fixed component $F$ satisfying $\dim(F)>0$ and $\dim(M) = \dim(F) \mod 4$. Then $M$ is diffeomorphic to one of $X_{m}(1), X_{m}(2), X_{2m}(2,2),$  for some $m \in \mathbb{N}$ or $X_{1}(3)$, $X_{2}(3)$. 
\end{theorem}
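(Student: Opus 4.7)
The plan is to apply Proposition \ref{general} to reduce the problem to the vanishing $I_{JR}(M)=0$, and then use the numerical invariants of complete intersections to classify the admissible multidegrees.

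First, I apply Proposition \ref{general} to $(M,\omega)$. Since $I_{JR}$ of a point equals $1-1=0$, isolated fixed points do not contribute to the right-hand side; and the hypothesis rules out all positive-dimensional fixed components $F$ with $\dim F \equiv \dim M \pmod{4}$, which are precisely the components appearing in the localization sum. Consequently $I_{JR}(M)=0$. Writing $m=\dim_{\mathbb{C}} M$ and using $b_{2i}(M)=1$ for $2i\neq m$ (Lefschetz hyperplane), a short count of even cohomological degrees modulo $4$ then yields three regimes: for $m \equiv 0 \pmod{4}$, $I_{JR}(M)=\sigma(M)-b_m(M)$, forcing the middle intersection form of $M$ to be positive definite; for $m \equiv 2 \pmod{4}$, $I_{JR}(M)=\sigma(M)+b_m(M)-2$, equivalent to $b_m^+(M)=1$; for $m$ odd, $I_{JR}(M)=\sigma(M)=0$ holds automatically and the identity is vacuous.

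Second, when $m$ is even I use Hirzebruch's signature theorem and the standard formulas expressing $\chi$ (and hence $b_m$) of a complete intersection as a polynomial in the multidegree. The topological constraint from Step~1 then becomes a Diophantine equation in $(d_1,\ldots,d_k)$; growth estimates in the spirit of the author's earlier work \cite{L} show that the only admissible multidegrees are $(1^a)$, $(2,1^a)$, $(2,2,1^a)$, together with the exceptional $(3)$ in complex dimension~$2$. These correspond to the diffeomorphism types $X_m(1)$, $X_m(2)$, $X_{2m}(2,2)$, and $X_2(3)$ listed in the theorem.

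Third, for $m$ odd the $I_{JR}$ constraint is vacuous, so a separate argument is required to show $b_m(M)=0$, which forces $M \cong X_m(1)$ or $X_m(2)$. Here I would combine Farber's localization for Betti numbers (Theorem \ref{locbet}) in the middle degree with an Euler characteristic count $\chi(M)=\sum_F \chi(F)$ over fixed components. The hypothesis forces every positive-dimensional fixed component to have real dimension divisible by~$4$, and the base cases in complex dimensions~$1$, $2$, $3$ are covered by Karshon \cite{Ka}, Tolman \cite{T}, and Dessai--Wiemeler \cite{DW}, providing the starting point for induction on $m$. The main technical obstacle is closing this induction: fixed components of a symplectic $S^1$-action on a simply connected manifold need not themselves be simply connected away from the extremal min and max components, and ensuring that the fixed-component hypothesis and the relevant positivity of Euler characteristics propagate to each step of the induction is where the argument is most delicate.
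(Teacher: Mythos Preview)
Your even case is essentially the paper's: Proposition~\ref{general} yields $I_{JR}(M)=0$, and then one invokes the Libgober--Wood classification (Theorem~\ref{lw}) directly rather than redoing the Diophantine analysis you sketch. One correction to your first paragraph: the sum in Proposition~\ref{general} runs over components with $\dim F\equiv 0\pmod 4$, not $\dim F\equiv\dim M\pmod 4$; the two conditions agree only when $4\mid\dim M$, which is precisely why the argument must split on the parity of $m$ and why, as you correctly observe, the $I_{JR}$ identity is vacuous for $m$ odd.

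The genuine gap is the odd case. Your proposed induction is not closed---you say so yourself---and it is also unnecessary. The paper's argument is direct and short. When $m$ is odd the hypothesis forces every positive-dimensional fixed component $F$ to have $\dim F\equiv 0\pmod 4$. By Lemma~\ref{betti} the only possibly nonzero odd Betti number of $M$ is $b_m(M)$, so Theorem~\ref{locbet} (which is Kirwan's result, not Farber's) forces each $F$ to have at most one odd index $j=m-2\lambda_F$ with $b_j(F)\neq 0$. Now apply Poincar\'e duality on $F$: it pairs $b_j(F)$ with $b_{\dim F-j}(F)$, and since $\dim F/2$ is even while $j$ is odd these two indices are distinct odd numbers, so both Betti numbers are forced to vanish. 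Hence all odd Betti numbers of every fixed component vanish, whence $b_m(M)=0$ by Theorem~\ref{locbet} again, $\chi(M)=m+1$, and Ewing--Moolgavkar (Theorem~\ref{betichi}) finishes. No induction on $m$, no base cases from \cite{Ka,T,DW}, and no worries about simple connectivity of interior fixed components are needed.
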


The proof of Theorem \ref{completeiso} for $n$ even is similar to \cite[Theorem 1.1]{L}, applying Proposition \ref{general} to obtain $I_{JR}(M)=0$  and then applying the classification of Libgober and Wood  \cite[Corollary 6.1]{LW}. For $n$ odd the proof is different, relying on the localization of Betti numbers formula for Hamiltonian $S^1$-actions (Theorem \ref{locbet}), known properties about the Betti numbers of complete intersections (Lemma \ref{betti}), then finally applying the classification of complete intersections having vanishing middle Betti number due to Ewing and Moolgavkar \cite[Corollary 5]{EM}.

Theorem \ref{completeiso} has the following consequence.

\begin{corollary} \label{iso}
Suppose that $(M,\omega)$ is a complete intersection admitting a symplectic $S^1$-action having only isolated fixed points. Then $M$ is diffeomorphic to one of $X_{m}(1), X_{m}(2), X_{2m}(2,2),$  for some $m \in \mathbb{N}$ or $X_{1}(3)$, $X_{2}(3)$. 
\end{corollary}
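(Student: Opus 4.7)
The plan is simply to specialize Theorem \ref{completeiso} to the case where every fixed component is a point. If the symplectic $S^1$-action on $(M,\omega)$ has only isolated fixed points, then each connected component $F$ of the fixed locus $M^{S^1}$ is a single point, so $\dim(F) = 0$. In particular, no such $F$ satisfies the conjunction $\dim(F) > 0$ and $\dim(M) \equiv \dim(F) \pmod 4$, since already the first clause fails for every fixed component.

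Consequently, the hypothesis of Theorem \ref{completeiso} is vacuously satisfied in this setting. Applying that theorem directly yields the desired conclusion: $M$ is diffeomorphic to one of $X_{m}(1)$, $X_{m}(2)$, $X_{2m}(2,2)$ for some $m \in \mathbb{N}$, or to $X_{1}(3)$ or $X_{2}(3)$. There is no genuine obstacle here and no need to re-enter the localization or Libgober--Wood arguments used in the proof of the parent theorem; the corollary is purely a notational specialization of Theorem \ref{completeiso} and its proof amounts to the single observation in the previous paragraph.
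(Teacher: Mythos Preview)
Your proposal is correct and matches the paper's approach exactly: the paper presents Corollary \ref{iso} as an immediate consequence of Theorem \ref{completeiso} without further argument, and your observation that isolated fixed points have dimension $0$ so the hypothesis of Theorem \ref{completeiso} is vacuously satisfied is precisely the intended (trivial) deduction.
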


Projective spaces and quadrics of any dimension admit a Hamiltonian torus action with only isolated fixed points. Each of the smooth $4$-manifolds underlying $X_{2}(2,2)$ and $X_{2}(3),$ admits a symplectic form invariant under a torus action, since either is diffeomorphic to $\mathbb{CP}^2$ blown-up in finitely many points. Hence, the only unknown case remaining from Theorem \ref{iso} is $X_{2m}(2,2)$ for $m \geq 2$.

 For a complete intersection $M$ let $\omega_{FS}$ denote the restriction of the Fubini-Study form to $M$. In Subsection \ref{GKM}, we are able to deal with $X_{2m}(2,2)$ when the action is GKM, and the symplectic form is homotopic to $\omega_{FS}$. Thus obtaining in Theorem \ref{fs} a sharp classification of the possible multidegrees of complete intersections admitting such actions, as in Benoist's original result on automorphism groups.

\begin{theorem} \label{fs}
Let $M$ be a complete intersection. Then $(M,\omega_{FS})$ has a GKM symplectic torus action $\iff  \;\; |Aut(M)| = \infty$.
\end{theorem}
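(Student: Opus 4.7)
I plan to treat the two implications separately.

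For the direction $(\Leftarrow)$, assuming $|\mathrm{Aut}(M)| = \infty$, Benoist's classification \cite[Theorem 3.1]{B} restricts $M$ (under the standing convention of complex dimension $\geq 2$) to $\mathbb{CP}^n$, a smooth quadric $X_n(2)$, or a smooth intersection of two quadrics $X_{2m}(2,2)$. In each case there is a positive-dimensional algebraic torus in $\mathrm{Aut}^0(M)$ lying in a unitary subgroup of the ambient $PGL$, and hence acting Hamiltonianly with respect to $\omega_{FS}$: namely, the coordinate torus $T^n$ on $\mathbb{CP}^n$; a maximal torus of $SO(n+2)$ acting on the quadric put into a normal form such as $z_0 z_{n+1} + z_1 z_n + \cdots = 0$; and, for $X_{2m}(2,2)$ defined by a simultaneously diagonalized pencil $\sum a_i z_i^2 = \sum b_i z_i^2 = 0$, the subtorus of the ambient coordinate torus preserving both quadric forms. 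In each case one checks directly from the coordinate description that the fixed set is isolated and that the weights at each fixed point are pairwise linearly independent, which is the GKM property. The most substantial of these verifications, for $X_{2m}(2,2)$, is presumably the technical core of Subsection~\ref{GKM}.

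For the direction $(\Rightarrow)$, suppose $(M, \omega_{FS})$ admits a GKM symplectic torus action. Since GKM forces the fixed set to be isolated, Corollary~\ref{iso} restricts the diffeomorphism type of $M$ to one of $X_m(1), X_m(2), X_{2m}(2,2), X_1(3), X_2(3)$. The first three satisfy $|\mathrm{Aut}| = \infty$ by Benoist, and the curve $X_1(3)$ is excluded by the standing dimension convention. The only remaining case to eliminate is the cubic surface $X_2(3)$.

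The main obstacle is ruling out $X_2(3)$ with the Fubini-Study form. Since the smooth cubic surface is anticanonically embedded, adjunction yields $c_1 = H|_{X_2(3)} = [\omega_{FS}]$, so $(X_2(3), \omega_{FS})$ is monotone. If the GKM torus has rank $\geq 2$, then by Delzant $(X_2(3), \omega_{FS})$ is symplectic toric, but the classification of smooth toric Fano $4$-manifolds yields only $\mathbb{CP}^2$, $\mathbb{CP}^1 \times \mathbb{CP}^1$, $\mathbb{F}_1$, and the del Pezzos of degrees $7$ and $6$, none of which is diffeomorphic to $X_2(3) \cong \mathbb{CP}^2 \# 6\overline{\mathbb{CP}^2}$. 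In the remaining rank-$1$ case, we obtain a Hamiltonian $S^1$-action with isolated fixed points on the monotone cubic surface; this can be excluded via Karshon's combinatorial classification \cite{Ka} of Hamiltonian $S^1$-actions on closed symplectic $4$-manifolds, where a case analysis of admissible Karshon graphs under the monotonicity constraint and the topological invariants $\chi(X_2(3))=9$, $\sigma(X_2(3))=-5$ shows no such action can realize $X_2(3)$ in the monotone class. This contradiction completes the proof.
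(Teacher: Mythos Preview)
There is a fundamental misreading of Benoist's theorem that makes the proposal collapse. Theorem~\ref{benoist} states that a smooth complete intersection of dimension at least two has infinite automorphism group \emph{if and only if} it is isomorphic to $X_n(1)$ or $X_n(2)$. The intersection of two quadrics $X_{2m}(2,2)$ is \emph{not} on this list; its automorphism group is finite for every $m\geq 1$. Consequently:

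In the $(\Leftarrow)$ direction you have added a spurious case. If $|\mathrm{Aut}(M)|=\infty$ then $M$ is projective space or a quadric, and the construction of a GKM action on those is straightforward. There is no positive-dimensional subtorus of the ambient coordinate torus preserving a generic pair of diagonal quadrics in $\mathbb{CP}^{2m+2}$; your claimed third construction does not exist, and this is exactly consistent with Benoist. The ``technical core of Subsection~\ref{GKM}'' is not the verification of GKM for $X_{2m}(2,2)$ --- it is the opposite.

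In the $(\Rightarrow)$ direction the error is fatal: after invoking Corollary~\ref{iso} you must exclude $X_{2m}(2,2)$ for all $m\geq 1$, not just $X_2(3)$. This is precisely the content of the paper's argument. The paper computes, using the Betti numbers \eqref{bettitwoquad} and \cite[Corollary~3.1]{GS}, that
\[
\int_M c_1 c_{n-1} = \tfrac{n}{2}(n+2)(n-1),
\]
whereas Lemma~\ref{torone} exhibits the toric $1$-skeleton --- a union of $n(n+2)$ invariant spheres --- as a cycle Poincar\'e dual to $c_{n-1}$. Since $c_1=(n-1)$ times the hyperplane class and $[\omega]=c_1$ after rescaling, each sphere contributes at least $n-1$ to $\int_M c_1 c_{n-1}$, forcing $\int_M c_1 c_{n-1}\geq n(n+2)(n-1)$, a contradiction. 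Your proposal contains nothing playing this role.

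Your treatment of $X_2(3)$ via Delzant/Karshon is plausible in outline for that single case (and a parallel argument would handle $X_2(2,2)$ in dimension~$2$), but the heart of the theorem --- ruling out $X_{2m}(2,2)$ in all even complex dimensions --- is entirely missing.
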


 Supposing the existence of such an action for a contradiction on $X_{2n}(2,2)$, the proof uses \cite[Corollary 3.1]{GS} and \cite[Proposition 3.4]{LW} to obtain an expression for $c_{1}c_{n-1}$ in terms of $n$ and \cite[Lemma 4.3]{GVS}, to evaluate the integral of $c_1$ along a hypothetical toric $1$-skeleton Poincar\'{e} dual to $c_{n-1}$. This allows us to obtain an inequality contradicting the aforementioned expression for $c_{1}c_{n-1}$.

\textbf{Unimodality of Betti numbers of closed symplectic manifolds admitting a Hamiltonian $S^1$-action.} In the final section of the paper, we prove some new results regarding the unimodality of Betti numbers of a closed symplectic manifold admitting a Hamiltonian $S^1$-action. Tolman has asked whether for a closed symplectic manifold with a Hamiltonian $S^1$-action, the even Betti numbers are unimodal (i.e. weakly increasing up to the middle degree) \cite[Problem 4.3]{JHKLM}. For K\"{a}hler manifolds the property follows from the hard Lefschetz theorem.  In \cite{CK} Cho and Kim gave an affirmative answer to \cite[Problem 4.3 (1)]{JHKLM} when $M$ is $8$-dimensional and the action has isolated fixed points. In Theorem  \ref{eightunimodal} we prove the unimodality of even Betti numbers for $8$-dimensional manifolds having a Hamiltonian $S^1$-action with no extremal $4$-dimensional fixed components.

\begin{theorem} \label{eightunimodal}
Suppose that $(M,\omega)$ is a closed symplectic $8$-manifold having a Hamiltonian $S^1$-action, and neither $M_{\min}$ nor $M_{\max}$ is $4$-dimensional. Then $b_{2}(M) \leq b_{4}(M)$.
\end{theorem}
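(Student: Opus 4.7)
The strategy is to apply the Morse-Bott localization of Betti numbers (Theorem \ref{locbet}) to the moment map of the Hamiltonian $S^1$-action, express $b_4(M) - b_2(M)$ as a sum of local contributions from the fixed-point components, and show non-negativity by reducing the essential arithmetic to the Cho-Kim inequality \cite{CK} for the isolated-fixed-point case.

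First I would use Theorem \ref{locbet} in the form $b_k(M) = \sum_F b_{k-2\lambda_F}(F)$, summing over the connected components $F$ of $M^{S^1}$ with negative Morse-Bott index $\lambda_F$. Subtracting $k = 2$ from $k = 4$ gives $b_4(M) - b_2(M) = \sum_F \Delta(F)$ with $\Delta(F) := b_{4-2\lambda_F}(F) - b_{2-2\lambda_F}(F)$. I would tabulate $\Delta(F)$ by $(\dim F, \lambda_F)$: under the hypothesis, any $4$-dimensional component has $\lambda_F = 1$ (the cases $\lambda_F \in \{0, 2\}$ would force $F$ to be an extremum, excluded) and contributes $\Delta(F) = b_2(F) - 1 \geq 0$, since a closed symplectic $4$-manifold satisfies $b_2 \geq 1$; a $6$-dimensional $M_{\max}$ contributes $\Delta(F) = b_2(F) - 1 \geq 0$; a $6$-dimensional $M_{\min}$ contributes $\Delta(F) = b_4(F) - b_2(F) = 0$ by Poincar\'{e} duality on $F$. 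All remaining non-zero contributions come from components of dimension at most $2$ and take values in $\{-1, 0, +1\}$.

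Next, Poincar\'{e} duality $b_2(M) = b_6(M)$, derived from the same localization applied at $k = 2$ and $k = 6$, gives a linear identity among the counts of fixed components. Substituting yields a $(\mu \leftrightarrow -\mu)$-symmetric reformulation of $b_4(M) - b_2(M)$; averaging the two expressions and using that the $\dim F \geq 4$ terms remain non-negative reduces the problem to the combinatorial inequality
\[
2 n_2 + m_1 + m_2 \geq n_1 + n_3 + m_0 + m_3,
\]
where $n_k$ counts isolated fixed points of index $k$ and $m_k$ counts $2$-dimensional components of index $k$, with $m_0, m_3 \in \{0, 1\}$ by connectedness of the extrema.

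The hard part is proving this inequality. In the isolated-fixed-point case (all $m_k = 0$), Poincar\'{e} duality gives $n_1 = n_3$, and the inequality reduces to $n_2 \geq n_1$, which is Cho and Kim's theorem \cite{CK}; their proof combines the rigidity of the Hirzebruch $\chi_y$-genus under $S^1$-actions on almost complex manifolds with its Atiyah-Bott fixed-point formula. To handle the mixed case, I would compute the $\chi_y$-contribution of a $2$-dimensional fixed component of index $\lambda$ and genus $g$, and show that including these contributions preserves the key inequality in the Cho-Kim argument. The role of the hypothesis is precisely to exclude wrong-sign contributions of extremal $4$-dimensional components in the $\chi_y$ expansion, making this extension possible.
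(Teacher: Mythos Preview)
Your approach diverges from the paper's and leaves a genuine gap at the step you yourself flag as ``the hard part.'' You reduce the question to a combinatorial inequality on index counts of fixed components of dimension at most $2$, and then propose to prove that inequality by extending the Cho--Kim $\chi_y$-genus argument from isolated fixed points to mixed configurations including surfaces. But this extension is never carried out: the $\chi_y$-localization contribution of a positive-dimensional fixed component involves the normal bundle and not merely the index, and for a genus-$g$ surface one has $\chi_y(F)=(1-g)(1-y)$, so the bookkeeping is not a simple reindexing of the isolated-point case. Without an explicit computation showing that the Cho--Kim coefficient inequalities survive these extra terms, the reduction is incomplete. Moreover, your symmetric inequality $2n_2+m_1+m_2\ge n_1+n_3+m_0+m_3$ is (when $\dim\ge 4$ components are present) a priori \emph{stronger} than $b_4\ge b_2$, so you are attempting to prove more than the theorem asserts; there is no guarantee this sharper statement holds.

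The paper bypasses all of this by bringing in the signature. Using Proposition~\ref{general}, one has $I_{JR}(M)=\sum_{\dim F=4}(-1)^{\lambda_F}I_{JR}(F)$; the hypothesis forces $\lambda_F=1$ for every $4$-dimensional $F$, and Lemma~\ref{dlemma} gives $I_{JR}(F)=2b_2^+(F)-2\ge 0$ since $F$ is symplectic. Hence $I_{JR}(M)\le 0$, i.e. $\sigma(M)-2+2b_2(M)-b_4(M)\le 0$. Combined with the trivial bound $\sigma(M)>-b_4(M)$ (the class $[\omega]^2$ has positive square), this yields $b_2(M)<b_4(M)+1$. The entire argument is a few lines, and the only inequalities used are $b_2^+(F)\ge 1$ and $b_4^+(M)\ge 1$, both immediate from symplecticity. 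Your route, even if completable, replaces this with a delicate rigidity analysis whose details you have not supplied.
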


 In Proposition \ref{twelve} we prove that for a closed symplectic $12$-manifold with $b_{2}(M)=1$, satisfying a similar assumption on the fixed point set, the even Betti numbers are unimodal. To the author`s knowledge this is new even under the assumption of isolated fixed points. Furthermore, in Proposition \ref{ineq} we give weaker versions of Proposition \ref{twelve} in general dimensions, generalizing an inequality of Cho \cite{C}  to more general fixed point sets. Finally, in Proposition \ref{roapp} we combine this result with the Rokhlin-Ochanine theorem to obtain a final application.

\textbf{Acknowledgments.} I would like to thank Leonor Godinho and Silvia Sabatini for helpful discussions, in particular explaining to me the results of \cite{GVS}, which allowed me to deal with the intersection of two quadrics in the GKM case. I presented some of the results about unimodality of Betti numbers in the workshop ``Perspectives in equivariant topology" in Cologne September 2022, I would like the thank those present for interesting discussions, in particular Liat Kessler and Susan Tolman.  I would like to thank Dmitri Panov for collaborations and discussions which influenced the overall approach of the project and for comments on a draft version. I would like thank the referee for the detailed report which greatly improved the article.

The author was supported by SFB-TRR 191 grant Symplectic Structures in Geometry, Algebra and Dynamics funded by the Deutsche Forschungsgemeinschaft.

 \section{Preliminaries} \label{prel}
 \subsection{Hamiltonian $S^1$-actions}  \label{hamact}

 Let $\mathfrak{t}$ denote the Lie algebra of the compact torus $T$, and $\mathfrak{t}^*$ its dual. Recall that if a manifold $M$ has a smooth $T$-action, then for every $t \in \mathfrak{t}$ we can naturally associate an invariant vector field on $M$; $X_{t}$, tangent to the orbits of the action of the one-parameter family associated to $t$.

\begin{definition}
Let $(M,\omega)$ be a symplectic manifold with a smooth $T$-action by symplectomorphisms. The action is called \textbf{Hamiltonian} if there is a smooth $T$-invariant function $\mu: M \rightarrow \mathfrak{t}^*$ called the \textbf{moment map} such that $$\omega (X_t, \cdot) = -d(\mu(t)),$$ for all $t \in  \mathfrak{t}$.
\end{definition}

 The fixed point set of the action, denoted $M^{T}$  is equal to the critical set of $\mu$, and is a collection of symplectic submanifolds \cite[Theorem 5.47]{MS}. A connected component of $M^T$ is called a \textbf{fixed component}. Moreover, there always exists a $T$-invariant, compatible almost complex structure.
 
When $T$ is $1$-dimensional, the moment map is referred to as the Hamiltonian and denoted $H: M \rightarrow \mathbb{R}$. The Hamiltonian $H$ is a perfect Morse-Bott function (\cite[Page 34]{Ki}, see Theorem \ref{locbet} below for a full statement), and a perfect Morse function if and only if $M^{S^1}$ is finite. 

Let $G \subset S^1$ be a subgroup. The subset $M^{G} = \{x \in M | \;g.p=p \; \forall g \in G\}$ is an $S^1$-invariant symplectic submanifold for which the $S^1$-action is Hamiltonian, with Hamiltonian $H|_{M^{G}}$ \cite[Lemma 5.53]{MS}. This will be used in the proof of Theorem \ref{compin}. Connected components of $M^{G}$ are called \textbf{isotropy submanifolds}.

Let $J$ denote a $T$-invariant, compatible almost complex structure. Note that at $p \in M^{S^1}$, $T_{p}M$ admits the structure of a complex $S^1$-representation. Therefore, we may decompose $T_{p}M$ into irreducible representations:

$$z.(z_{1},\ldots, z_{n}) = (z^{w_1}z_1 , \ldots, z^{w_n}z_n ),$$ where $w_{i} \in \mathbb{Z}$ for each $i$. The multiset of integers $\{w_i\}$ is an invariant of the representation and called the \textbf{weights} of the $S^1$-action at $p$. 

 For a fixed component $F$ denote by $\lambda_{F}$  half the Morse-Bott index of $H$ along $F$. It holds that $\lambda_{F}$ is also equal to the the number of strictly negative weights of the $S^1$-action along $F$ counted with multiplicity \cite[Page 7]{T}. We will need the following localization formula for the Betti numbers due to Kirwan \cite[Page 34]{Ki}.  We note that in \cite{Ki} the result appears as an auxiliary result to prove that the norm squared of the moment map for a general Hamiltonian Lie group action is perfect, see \cite[Section 2]{T} for a comprehensive treatment focusing only on material relevant to proving Theorem \ref{locbet} for $S^1$-actions.
 
 \begin{theorem} \label{locbet} \cite[Page 34]{Ki}
 Let $(M,\omega)$ be a closed symplectic manifold having a Hamiltonian $S^1$-action. Then for each $i$, $$ b_{i}(M) = \sum_{F \subset M^{S^1}} b_{i - 2\lambda_{F}} (F) ,$$ where the sum runs over each of the fixed components. 
 \end{theorem}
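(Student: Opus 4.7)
The plan is to verify that the Hamiltonian $H\colon M \to \mathbb{R}$ is a perfect Morse-Bott function whose critical set is $M^{S^1}$ and whose Morse-Bott indices along each fixed component $F$ equal $2\lambda_F$; the claimed identity of Betti numbers then follows by comparing coefficients in the Morse-Bott equality of Poincar\'e polynomials. The identification $\mathrm{Crit}(H) = M^{S^1}$ is immediate from the defining equation $\omega(X,\cdot) = -dH$, where $X$ generates the action: non-degeneracy of $\omega$ forces $dH$ and $X$ to vanish at the same points.

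Next I would verify the Morse-Bott property and compute the indices. Fix an $S^1$-invariant compatible almost complex structure $J$. Along a fixed component $F$, the normal bundle $\nu(F)$ carries a fibrewise $S^1$-representation with only nonzero weights, and $J$ splits it equivariantly into complex line bundles with weights $w_1,\dots,w_k$. Choosing Darboux coordinates near $p \in F$ adapted to this splitting, the Hamiltonian takes the model form $H = \mathrm{const} + \tfrac{1}{2}\sum_i \epsilon\,w_i(x_i^2+y_i^2)$ for a sign $\epsilon$ depending only on convention; thus $\mathrm{Hess}(H)|_{\nu_p F}$ is non-degenerate, decomposing according to the complex lines, with negative eigenspace of real dimension $2\lambda_F$ by the definition of $\lambda_F$ as the number of negative weights.

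The substantive step is proving perfection, for which I would follow the Atiyah-Bott/Kirwan equivariant approach. Filter $M$ by the sublevel sets $M^c := H^{-1}(-\infty,c]$ and analyze how $S^1$-equivariant cohomology changes when crossing a critical value whose fixed component is $F$. The relevant Thom-Gysin long exact sequence is governed by the equivariant Euler class $e_{S^1}(\nu^-(F))$ of the negative normal bundle. Because each weight on $\nu^-(F)$ is nonzero, this Euler class has leading term $\bigl(\prod_i w_i\bigr)\,u^{\lambda_F}$ in $H^*_{S^1}(F) \cong H^*(F)[u]$, a nonzero scalar multiple of a pure power of $u$; consequently it is not a zero divisor, and the Thom-Gysin sequence splits into short exact sequences. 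Iterating over critical values yields
$$P_t^{S^1}(M) = \sum_{F\subset M^{S^1}} t^{2\lambda_F}\, P_t^{S^1}(F).$$
Since the $S^1$-action on $F$ is trivial, $P_t^{S^1}(F) = P_t(F)/(1-t^2)$; the splittings also imply equivariant formality of $M$, giving $P_t^{S^1}(M) = P_t(M)/(1-t^2)$. Cancelling the common factor and comparing coefficients yields the stated formula.

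The main obstacle is the perfection step: upgrading the Morse-Bott inequalities to equalities. This is where the Hamiltonian hypothesis is essential, since the normal $S^1$-representations along fixed components have no zero weights, making $e_{S^1}(\nu^-(F))$ a non-zero-divisor and forcing the Thom-Gysin sequence to split. For a general $S^1$-invariant Morse-Bott function on $M$ this argument would fail, and one would obtain only inequalities with potentially extra cohomology appearing when traversing critical values.
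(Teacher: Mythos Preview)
Your proposal is correct and follows precisely the Atiyah--Bott/Kirwan argument that the paper cites (the paper does not supply its own proof of this theorem, referring instead to \cite[Page 34]{Ki} and \cite[Section 2]{T}). Your outline---identifying $\mathrm{Crit}(H)=M^{S^1}$, computing the Morse--Bott index as $2\lambda_F$ via the weight decomposition of the normal bundle, and then establishing perfection by showing the equivariant Euler class $e_{S^1}(\nu^-(F))$ is a non-zero-divisor so that the Thom--Gysin sequences split---is exactly the standard route. Two minor points worth tightening: drop the stray sign $\epsilon$ in your local model (the signs are already carried by the $w_i$), and make explicit that you work with rational (or real) coefficients so that the leading coefficient $\prod_i w_i$ is a unit, which is what makes the non-zero-divisor argument go through cleanly.
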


We use $\sigma(M)$ to denote the signature of $M$, which is equal to the signature of the symmetric bilinear form $$H^{\frac{\dim(M)}{2}}(M,\mathbb{R}) \times H^{\frac{\dim(M)}{2}}(M,\mathbb{R}) \rightarrow \mathbb{R},$$ that is $$\sigma(M) = b_{\frac{\dim(M)}{2}}^{+}(M) - b_{\frac{\dim(M)}{2}}^{-}(M), $$ where $b_{\frac{\dim(M)}{2}}^{+}(M)$ (resp. $b_{\frac{\dim(M)}{2}}^{-}(M)$) denotes the maximal subspace on which the form is positive (resp. negative) definite.  $\sigma(M)$ is defined to be equal to zero for manifolds of dimension not divisible by $4$ and is equal to $1$ when $M$ is a single point.

The following localization formula is a consequence of the equivariant version of the Atiyah-Singer index theorem. The formula we will use is derived in \cite[Section 5.8]{HBJ}. 
  
  \begin{theorem} \label{locsig}
 Let $(M,\omega)$ be a closed symplectic manifold having a Hamiltonian $S^1$-action. Then $$\sigma(M) = \sum_{F \subset M^{S^1}} (-1)^{\lambda_{F}} \sigma(F) .$$ 
 \end{theorem}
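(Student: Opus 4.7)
The plan is to apply the Atiyah--Singer $G$-signature theorem together with rigidity of the signature under smooth $S^1$-actions, essentially following the route of [HBJ, Section 5.8].

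First I would define the equivariant signature $\sigma(g, M)$ for $g \in S^1$ as the equivariant index of the signature operator on $M$, so that $\sigma(1, M) = \sigma(M)$. By Atiyah--Hirzebruch rigidity $\sigma(g, M)$ is independent of $g$; equivalently, the fixed point formula below exhibits $\sigma(g,M)$ as a rational function of $e^{i\theta}$ which is continuous on $S^1$ and integer-valued, hence constant.

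Next, pick a generic $g = e^{i\theta}$ so that $M^g = M^{S^1}$, and apply the Atiyah--Bott--Segal--Singer fixed point theorem. For each fixed component $F$, $S^1$-equivariantly split the normal bundle as $N_F = \bigoplus_j L_j$ with nonzero weights $w_j$; writing $x_k$ for the Chern roots of $TF$ and $y_j$ for those of $L_j$, the local contribution at $F$ takes the form
$$\int_F \prod_k x_k \coth(x_k/2) \cdot \prod_j \coth\bigl((y_j + i w_j \theta)/2\bigr),$$
whose first factor is the Hirzebruch $L$-class of $F$.

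Finally I would extract the value at $\theta \to 0$ using rigidity. Each summand is singular at $\theta = 0$, but the total sum is regular and equals $\sigma(M)$. Expanding $\coth\bigl((y_j + i w_j \theta)/2\bigr)$ as a Laurent series and keeping only the piece that survives integration against $F$, each weight $w_j$ contributes a leading sign $\mathrm{sign}(w_j)$, so the combined sign at $F$ is $(-1)^{\lambda_F}$; the residual integrand is $L(TF)$, which integrates to $\sigma(F)$ by Hirzebruch's signature theorem. The main obstacle is making this pole cancellation at $\theta = 0$ rigorous; the cleanest route is to work in equivariant $K$-theory using partial fraction decompositions of Laurent polynomials in $e^{i\theta}$ (as carried out in [HBJ]), or equivalently to substitute $\theta = is$ and exploit the monotonicity of $\coth$ on $\mathbb{R}$ to read off the signs directly.
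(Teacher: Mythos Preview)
Your proposal is correct and matches the paper's own treatment: the paper does not give an independent proof of this theorem but simply cites \cite[Section 5.8]{HBJ}, and your outline is precisely the Atiyah--Singer/rigidity argument carried out there. The only point worth tightening is the sign extraction: rather than taking $\theta\to 0$ (where individual summands are singular), the clean route is exactly your alternative of setting $q=e^{i\theta}$ and letting $q\to 0$ (equivalently $\theta=is$, $s\to\infty$), so that each $\coth$ factor tends to $\pm 1$ according to the sign of $w_j$ and the product over the normal directions yields $(-1)^{\lambda_F}$ directly.
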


\subsection{GKM actions}
GKM actions introduced in \cite{GKM} are a natural class of torus actions in various categories, which having associated (multi)graph, usually called the GKM graph encoding topological and geometric information about the torus action. The amenability of the GKM graph to combinatorial arguments make GKM actions an attractive class of torus actions to study. In \cite{GVS} results about GKM actions in the setting Hamiltonian torus actions on closed symplectic manifolds were proved, which we will use in Subsection \ref{GKM}.  Firstly, we recall the definition. 

\begin{definition}
Let $(M,\omega)$ be a closed symplectic manifold with a Hamiltonian $T^k$-action. The action is called GKM if for every codimension $1$ subgroup $K \subset T^k$, $M^{K}$ has dimension at most $2$.
\end{definition}

If such manifold has a GKM action then the subsets $M^{K}$ for each codimension $1$ subgroup $k$ form a collection of $\frac{n \chi(M)}{2}$, invariant symplectic $2$-spheres, corresponding to the edges of the GKM graph, whose union is called the \textbf{toric 1 skeleton}. To prove Theorem \ref{GKM} we will need \cite[Lemma 4.3]{GVS}, we restate it here for convenience.

\begin{lemma} \label{torone} \cite[Lemma 4.3]{GVS}
Let $(M,\omega)$ be a closed symplectic $2n$-manifold with a Hamiltonian GKM $T^k$-action. Then the toric $1$-skeleton is Poincar\'{e} dual to $c_{n-1}$.
\end{lemma}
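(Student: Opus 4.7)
The plan is to work in $T$-equivariant cohomology and exploit the GKM injectivity theorem. Write $T = T^k$ and $C = \bigcup_e S_e$ for the toric $1$-skeleton, a union of invariant symplectic $2$-spheres meeting pairwise only at isolated fixed points. Under the GKM hypothesis, the restriction map $H_T^*(M) \hookrightarrow \bigoplus_{p \in M^T} H_T^*(\{p\})$ is injective; hence it suffices to show that the equivariant Poincar\'e dual $[C]^T$ and the equivariant Chern class $c_{n-1}^T(TM)$ have equal restrictions at every fixed point $p$. Since the pairwise intersections of the $S_e$ are zero-dimensional, one has $[C]^T = \sum_e [S_e]^T$ in $H_T^{2n-2}(M)$.

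For a fixed point $p$, let $\alpha_1(p), \ldots, \alpha_n(p) \in \mathfrak{t}^*$ be the weights of the isotropy representation on $T_pM$; the GKM hypothesis forces these to be pairwise linearly independent, since a linear dependence $\alpha_i \parallel \alpha_j$ would produce a codimension-$1$ subgroup $K = \ker \alpha_i = \ker \alpha_j$ with $\dim M^K \geq 4$. The Whitney sum formula applied to the weight decomposition $T_pM = \bigoplus_i L_{\alpha_i(p)}$ gives
\[
c_{n-1}^T(TM)|_p \;=\; e_{n-1}(\alpha_1(p), \ldots, \alpha_n(p)) \;=\; \sum_{i=1}^n \prod_{j \neq i} \alpha_j(p).
\]
The edges of the GKM graph incident to $p$ correspond bijectively to the weights: the $i$-th edge is the connected component through $p$ of $M^{\ker \alpha_i(p)}$, an invariant $2$-sphere $S_{e_i}$ whose tangent weight at $p$ is $\alpha_i(p)$ and whose normal weights at $p$ are the remaining $\{\alpha_j(p)\}_{j \neq i}$. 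Any edge $e$ not containing $p$ satisfies $[S_e]^T|_p = 0$, while for $e = e_i$ the restriction of the equivariant Poincar\'e dual equals the equivariant Euler class of the normal bundle of $S_{e_i}$ in $M$ at $p$, namely $\prod_{j \neq i} \alpha_j(p)$. Summing over the $n$ incident edges recovers the formula above, establishing $[C]^T|_p = c_{n-1}^T(TM)|_p$, and by GKM injectivity the equality holds in $H_T^{2n-2}(M)$.

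The passage from equivariant to ordinary cohomology is immediate via the forgetful map $H_T^*(M) \to H^*(M)$, which respects both Poincar\'e duals of $T$-invariant submanifolds and Chern classes. The main ingredient of the argument is the GKM injectivity theorem, reducing the comparison to restrictions at fixed points; once this is in hand, the point-wise identification is the elementary symmetric identity $e_{n-1}(\alpha_1, \ldots, \alpha_n) = \sum_i \prod_{j \neq i} \alpha_j$. The only subtle bookkeeping is confirming that the normal bundle of an invariant $2$-sphere at one of its fixed endpoints carries precisely the complementary weights to the tangent weight along the sphere, which follows from the splitting of the tangent representation at $p$ together with the identification of $S_{e_i}$ as the fixed locus of $\ker \alpha_i(p)$.
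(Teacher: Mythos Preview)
The paper does not prove this lemma; it merely restates \cite[Lemma 4.3]{GVS} for the reader's convenience, so there is no in-paper proof to compare against. Your argument is correct and is essentially the standard equivariant-cohomology proof of the result (and, to the best of my knowledge, matches the approach in the original reference): GKM injectivity reduces the identity to a comparison at fixed points, where both sides become $\sum_{i}\prod_{j\neq i}\alpha_j(p)$ --- the right-hand side by the Whitney sum formula for $c_{n-1}^T$, the left-hand side because the equivariant Poincar\'e dual of each incident sphere restricts to the equivariant Euler class of its normal bundle.
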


\subsection{Applications of Duistermaat-Heckman} \label{apdh}

We begin by recalling the Duistermaat-Heckman theorem  \cite[Theorem 1.1]{DH}. Suppose that $(M,\omega)$ is a closed symplectic manifold with a Hamiltonian $S^1$-action, and let $c$ be a regular value of $G$. Recall that $$M_{c} := H^{-1}(c)/S^1$$ has the structure of a symplectic orbifold, induced via projection from the coisotropic submanifold $H^{-1}(c) \subset M$, see \cite[Page 175]{MS}.

Before stating the Duistermaat-Heckman theorem, we have to recall some of the properties of the gradient flow for a Hamiltonian $S^1$-action. Fix an $S^1$-invariant compatible almost complex structure $J$, and let $g$ be the associated metric. Recall gradient vector field $\nabla_{g}H$, is the vector field $g$-dual to the $1$-form $dH$. Then, if $(a,b)$ is an interval on which $H$ is regular and $c,d \in (a,b)$ the flow of $Y$ gives an $S^1$-equivariant diffeomorphism $H^{-1}(a) \rightarrow H^{-1}(b)$ and therefore a orbifold diffeomorphism $F_{a,b} :M_{a} \rightarrow M_{b}$. The Duistermaat-Heckman theorem  \cite[Theorem 1.1]{DH} states that the variation of the cohomology $$F_{a,b}^{*}[\omega_{b}] \in H^{2}(M_{a},\mathbb{R}) $$ where $b$ varies and $a$ is constant is linear and gives an explicit formula for the direction of this linear path. As is usual in the subject, in the statement and applications of the Duistermaat-Heckman theorem that follow, we keep the above understanding of what variation of $[\omega_{c}]$ means, and do not make explicit reference to the gradient flow.

\begin{theorem} \label{dhthe} \cite[Theorem 1.1]{DH}
Suppose that $(M,\omega)$ is a closed symplectic manifold with a Hamiltonian $S^1$-action, with Hamiltonian $H$. Let $c$ be a regular value of $H$. Then $$\frac{d}{dt}[\omega_t] |_{t=c}= e(H^{-1}(c)) .$$ 

Here $H^{-1}(c)$ is considered as an (orbifold) principal $S^1$-bundle over the reduced symplectic orbifold $M_{c}$. Finally, $e$ denotes the Euler class of this bundle in $H^{2}(M_{t},\mathbb{R})$.
\end{theorem}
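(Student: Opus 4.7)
The plan is to build an equivariant normal form for $\omega$ on a neighborhood of $H^{-1}(c)$ adapted both to the gradient flow of $H$ and to a chosen connection on the orbifold principal $S^1$-bundle $\pi : H^{-1}(c) \to M_c$, and then to read off the derivative of $F_{c,t}^{*}[\omega_t]$ directly from that normal form, following the original argument in \cite{DH}.

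First I would fix an open interval $(a,b)$ of regular values of $H$ containing $c$ and put $U := H^{-1}(a,b)$. Using the $S^1$-invariant metric associated with an invariant compatible almost complex structure, the rescaled gradient flow of $H$ produces an $S^1$-equivariant diffeomorphism $\Psi : H^{-1}(c) \times (a,b) \to U$ with $H \circ \Psi(p,t) = t$; restricting $\Psi$ to $\{t\}$ identifies the slice with $H^{-1}(t)$, and passing to $S^1$-orbits yields precisely the family $F_{c,t} : M_c \to M_t$ appearing in the statement. Next I would choose an $S^1$-invariant connection 1-form $\alpha$ on $\pi$, normalized so that $\alpha(X) = 1$ for $X$ the fundamental vector field of the $S^1$-action, and write $d\alpha = \pi^{*}e$ with $e$ a closed 2-form on $M_c$ representing the Euler class. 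Pulling $\alpha$ back via $H^{-1}(c) \times (a,b) \to H^{-1}(c)$ extends it to $U$, still with $\alpha(X) = 1$ and $\iota_{\partial_t}\alpha = 0$.

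On $U$ I would decompose $\omega = \eta_t + dt \wedge \beta$, where $\eta_t$ is the slice restriction of $\omega$ (extended to annihilate $\partial_t$) and $\beta$ is characterized by $\iota_{\partial_t}\beta = 0$. The Hamiltonian condition $\iota_X\omega = -dH = -dt$, combined with $\iota_X\eta_t = 0$ (since $X$ is tangent to slices and $\eta_t$ restricts from $\omega$), forces $\beta(X) = 1$; writing $\beta = \alpha + \gamma$ with $\gamma$ horizontal and $S^1$-invariant, $\gamma$ is basic, and the contribution $dt \wedge \gamma$ can be absorbed into $\eta_t$ by a basic change of representative, reducing the decomposition to $\beta = \alpha$. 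The closedness relation $d\omega = 0$ then yields on each slice
\begin{equation*}
\partial_t \eta_t \;=\; -\,\pi^{*}e,
\end{equation*}
with the sign fixed by the sign convention $\iota_X\omega = -dH$ used in the preliminaries.

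Since $\eta_t$ and $\pi^{*}e$ are both $S^1$-basic on each slice, this identity descends to the reduced orbifolds, giving $\frac{d}{dt}F_{c,t}^{*}[\omega_t] = \pm[e]$ in $H^2(M_c,\mathbb{R})$, with the final sign reconciled against the convention stated in the theorem, and evaluating at $t=c$ recovers the claim. I expect the main technical obstacle to be the decomposition step: one must verify carefully that after the basic correction the 1-form $\beta$ agrees with the chosen connection $\alpha$, which uses the $S^1$-equivariance of $\omega$ and the local freeness of the action on $U$ so that all reduced spaces are symplectic orbifolds in the sense of \cite[Page 175]{MS}. This is the place where orbifold subtleties at points with nontrivial isotropy must be handled, but since $c$ is regular the action on $H^{-1}(c)$ is locally free and the standard orbifold de Rham apparatus suffices.
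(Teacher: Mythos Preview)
The paper does not prove this theorem at all; it is quoted verbatim from \cite[Theorem~1.1]{DH} and used as a black box in Subsection~\ref{apdh} and the appendix. So there is nothing in the paper to compare your argument against. What you have written is essentially the original Duistermaat--Heckman proof: trivialize a tube by the gradient flow, split $\omega$ along slices, and extract $\partial_t\eta_t$ from $d\omega=0$ together with the moment-map condition.

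Two points of presentation deserve tightening. First, the line ``$dt\wedge\gamma$ can be absorbed into $\eta_t$ by a basic change of representative, reducing the decomposition to $\beta=\alpha$'' is not literally correct: $dt\wedge\gamma$ does not annihilate $\partial_t$, so it cannot sit inside $\eta_t$ under your own splitting. The clean way through is simply to note that $\beta_t$ is \emph{itself} a connection $1$-form on $\pi$ for every $t$ (it is $S^1$-invariant with $\beta_t(X)=1$), so $d'\beta_t=\pi^*\Omega_t$ with $[\Omega_t]$ independent of $t$ and equal to the Euler class; no reduction to a fixed $\alpha$ is needed. Second, with your conventions ($d\alpha=\pi^*e$ and $\iota_X\omega=-dH$) the slice computation gives $\partial_t\eta_t = d'\beta_t = +\pi^*e$, not $-\pi^*e$; your hedging about reconciling the sign at the end is covering for an actual sign slip in the displayed equation. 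Neither issue affects the strategy, which is the standard one and is sound.
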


It is known that when $(M,\omega)$ is a closed symplectic manifold with a Hamiltonian $S^1$-action, all level sets of the Hamiltonian $H$  are connected  \cite[Lemma 5.51]{MS}. We denote by $M_{\min}$ and $M_{\max}$ the subsets on which the Hamiltonian obtains its minimum and maximum respectively. Note that since $H$ is critical along these subsets, they are pointwise fixed by $S^1$. Moreover, they are both connected, symplectic submanifolds by \cite[Lemma 5.51]{MS} and  \cite[Theorem 5.47]{MS} respectively. The following lemma was proven in \cite{T} using the Duistermaat-Heckman Theorem.

\begin{lemma} \label{hamiltonianlemma}\cite[Lemma 3.1]{T} Let $(M,\omega)$ be closed symplectic manifold with a Hamiltonian $S^1$-action. Then for a fixed component $F$, $$\lambda_{F} \leq \sum_{ H(F') < H(F)} (\frac{\dim(F')}{2} + 1). $$
\end{lemma}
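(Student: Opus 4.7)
The plan is to apply the Duistermaat--Heckman theorem (Theorem \ref{dhthe}) to track the variation of the reduced symplectic space $M_c = H^{-1}(c)/S^1$ as $c$ crosses the critical values of $H$ lying below $H(F)$. First I would fix a regular value $c$ of $H$ just less than $H(F)$, so that $M_c$ is a symplectic orbifold of real dimension $2n-2$ (where $\dim(M) = 2n$). Using the gradient flow of an $S^1$-invariant compatible metric, the descending trajectories emanating from $F$ furnish $M_c$ with an embedded sub-orbifold $\mathbb{P}(E^-(F))$ of real dimension $\dim(F) + 2\lambda_F - 2$; this is a $\mathbb{CP}^{\lambda_F - 1}$-bundle over $F$ and is the natural receptacle in $M_c$ for the $\lambda_F$ negative weights of the $S^1$-action at $F$.

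Next I would build $M_c$ up from below. For $c'$ just above $H_{\min}$, the reduced space $M_{c'}$ is the projective bundle $\mathbb{P}(\nu M_{\min})$ over $M_{\min}$. As $c'$ is increased through each intermediate critical value, Guillemin--Sternberg wall-crossing describes the passage $M_{c_i - \epsilon} \dashrightarrow M_{c_i + \epsilon}$ as a birational transformation: blow up $\mathbb{P}(E^-(F_i))$ and blow down the exceptional locus along the other ruling, so that $\mathbb{P}(E^-(F_i))$ is replaced by $\mathbb{P}(E^+(F_i))$. This identifies $M_c$ as obtained from $\mathbb{P}(\nu M_{\min})$ by precisely one such wall-crossing for each fixed component $F'$ with $H(F') < H(F)$.

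To extract the bound I would then combine Kirwan surjectivity $H^*_{S^1}(M) \twoheadrightarrow H^*(M_c)$ with a rank count along the wall-crossing sequence. The embedded $\mathbb{P}(E^-(F)) \subset M_c$ requires $\lambda_F$ independent classes in $H^*(M_c)$ (the powers of the tautological class of the projective bundle $\mathbb{P}(E^-(F)) \to F$, which must restrict from ambient classes of $M_c$). On the other hand, each wall-crossing at $F'$ enlarges $H^*(M_c)$ by at most $\dim(F')/2 + 1$ new generators: the Chow group of the exceptional divisor over $F'$ (a $\mathbb{CP}^\bullet$-bundle whose Poincaré polynomial has $\dim(F')/2 + 1$ summands in the relevant degree) supplies the upper bound. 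Summing over all $F'$ with $H(F') < H(F)$ yields the stated inequality.

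The main obstacle is making precise the bookkeeping that each wall-crossing contributes at most $\dim(F')/2 + 1$ independent classes to accommodate the Chern classes of $\mathbb{P}(E^-(F)) \to F$; a cleaner alternative is to forego the reduced space and work directly with equivariant cohomology, using ABBV localization to rewrite the required positivity/rank statement as an inequality among equivariant Euler classes of the negative normal bundles, which reduces the claim to a combinatorial weight estimate at the fixed components.
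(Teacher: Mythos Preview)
The paper does not actually prove this lemma; it is quoted from Tolman \cite[Lemma~3.1]{T} with only the remark that it was proven there using the Duistermaat--Heckman theorem. So there is no in-paper proof to compare against, only Tolman's original.

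Your sketch assembles the right objects but the decisive step---the rank count---does not go through as written. The claim that crossing the wall at $F'$ enlarges $H^*(M_c)$ by at most $\tfrac{\dim F'}{2}+1$ new generators is not correct: already for the very first reduced space $M_{c_0}\cong\mathbb{P}(\nu M_{\min})$ one has $\dim H^*(M_{c_0})=\dim H^*(M_{\min})\cdot\bigl(n-\tfrac{\dim M_{\min}}{2}\bigr)$, which is typically far larger than $\tfrac{\dim M_{\min}}{2}+1$. The Guillemin--Sternberg wall-crossing formula describes the change in Betti numbers in terms of both $\lambda_{F'}$ and the coindex, not merely $\dim F'$, and does not yield the bound you state. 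Similarly, while it is true that the powers $1,e,\dots,e^{\lambda_F-1}$ of the Euler class are nonzero in $H^*(M_c)$ (they restrict nontrivially to $\mathbb{P}(E^-(F))$), you have not produced any subspace of $H^*(M_c)$ of dimension at most $\sum(\tfrac{\dim F'}{2}+1)$ that is forced to contain them. Without that containment the inequality does not follow, and your own final paragraph effectively concedes this.

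Tolman's argument bypasses the full cohomology ring of $M_c$ and works instead with the single scalar function $\rho(t)=\int_{M_t}\omega_t^{\,n-1}/(n-1)!$, the Duistermaat--Heckman density. This is piecewise polynomial of degree at most $n-1$, and at each wall $H(F')$ the jump $\rho_+-\rho_-$ vanishes to order $n-\tfrac{\dim F'}{2}-1$, hence lies in a space of dimension $\tfrac{\dim F'}{2}+1$. Summing, the polynomial on the interval just below $H(F)$ is constrained to a subspace of dimension at most $\sum_{H(F')<H(F)}(\tfrac{\dim F'}{2}+1)$, and the bound on $\lambda_F$ is then extracted from positivity of $\rho$ together with the local form of the density near $H(F)$. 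If you want to repair your approach, the cleanest fix is to replace the uncontrolled object $H^*(M_c)$ by this single polynomial $\rho(t)$ and track its degrees of freedom; that is precisely what the Duistermaat--Heckman proof does.
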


We call a fixed point component $Y$ internal if $H(M_{\min}) < H(Y) < H(M_{\max})$. Lemma \ref{hamiltonianlemma} has the following consequence which we will use.

\begin{lemma} \label{DH Lemma}
Suppose that $(M,\omega)$ is a closed symplectic manifold with a Hamiltonian $S^1$-action. Suppose that $M_{\min}$ is an isolated fixed.  If $Y$ is an internal fixed point component such that $H(Y') \geq H(Y)$ for any other internal fixed component $Y'$, then $\lambda_{Y} = 1$.  If $Y$ is an internal fixed point component such that $H(Y') \leq H(Y)$ for any other internal fixed component $Y'$, then $\lambda_{Y} = n-1$.
\end{lemma}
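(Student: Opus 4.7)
For the first statement I apply Lemma \ref{hamiltonianlemma} directly. The hypothesis on $Y$ means no internal component sits below $Y$, so the only fixed component $F'$ with $H(F')<H(Y)$ is $M_{\min}$, and since $M_{\min}$ is isolated the right-hand side of Lemma \ref{hamiltonianlemma} equals $\tfrac{\dim(M_{\min})}{2}+1=1$. Thus $\lambda_Y\le 1$. The reverse inequality $\lambda_Y\ge 1$ is automatic: if $\lambda_Y=0$ then all weights of the $S^1$-action at $Y$ are non-negative, making $Y$ a local minimum of $H$; connectedness of the level sets of $H$ (\cite[Lemma 5.51]{MS}) then forces $Y=M_{\min}$, contradicting $Y$ being internal. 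Hence $\lambda_Y=1$.

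For the second statement I would apply the same reasoning to the reversed $S^1$-action, equivalently to the Hamiltonian $-H$. Reversing the action negates every weight at every fixed component, so the new Morse--Bott index at a fixed component $F$ is
$$\lambda_F^{(-H)}=n-\dim_{\mathbb{C}}(F)-\lambda_F^{(H)}.$$
Under $-H$, $M_{\max}$ becomes the minimum and $Y$ becomes the lowest internal component. Lemma \ref{hamiltonianlemma} applied to $-H$ at $Y$ yields $\lambda_Y^{(-H)}\le \tfrac{\dim(M_{\max})}{2}+1$, that is
$$\lambda_Y\ge n-\dim_{\mathbb{C}}(Y)-\tfrac{\dim(M_{\max})}{2}-1,$$
while the matching upper bound $\lambda_Y\le n-\dim_{\mathbb{C}}(Y)-1$ is forced by the local-extremum argument applied to $-H$ at $Y$ (since $Y\neq M_{\max}$). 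The identity $\lambda_Y=n-1$ is therefore equivalent to $M_{\max}$ and $Y$ both being isolated fixed points.

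The main obstacle is thus to deduce $\dim(M_{\max})=0$ and $\dim_{\mathbb{C}}(Y)=0$ from the hypothesis that $M_{\min}$ is isolated together with the extremality of $Y$ among internal components. This is where the Duistermaat--Heckman theorem (Theorem \ref{dhthe}) enters, justifying the name of the lemma. The plan is to exploit the rigidity forced by the isolated minimum: for $c$ slightly above $H(M_{\min})$ the reduced space $M_c$ is a (weighted) projective space $\mathbb{CP}^{n-1}$ whose $H^2$ is generated by a single class, and Theorem \ref{dhthe} propagates $[\omega_c]$ linearly in $c$ across the single intervening critical value $H(Y)$ up to $c$ just below $H(M_{\max})$, where $M_c$ must also be a (weighted) projective bundle over $M_{\max}$ of fibre dimension $n-1-\dim_{\mathbb{C}}(M_{\max})$ collapsing at $c=H(M_{\max})$. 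Matching the two descriptions, together with the Morse-theoretic picture of the reduction at $Y$ (a $\mathbb{P}^{n-\dim_{\mathbb{C}}(Y)-\lambda_Y-1}$-bundle over $Y$ being replaced by a $\mathbb{P}^{\lambda_Y-1}$-bundle over $Y$), is expected to yield numerical constraints that force $\dim(M_{\max})=\dim_{\mathbb{C}}(Y)=0$, at which point the symmetric application of Lemma \ref{hamiltonianlemma} above delivers $\lambda_Y=n-1$.
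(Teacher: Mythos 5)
Your treatment of the first statement is correct and is essentially the paper's argument, which simply says ``apply Lemma \ref{hamiltonianlemma}''; your added observation that $\lambda_Y\geq 1$ because an internal component cannot be a local minimum (by connectedness of level sets) is exactly the detail needed to upgrade the inequality $\lambda_Y\leq 1$ to an equality.

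For the second statement there is a genuine gap, and it is located precisely where you placed your ``main obstacle.'' The paper's own proof is just the symmetric argument you begin with: apply Lemma \ref{hamiltonianlemma} to $-H$. You correctly compute that this only yields $\lambda_Y\geq n-\dim_{\mathbb{C}}(Y)-\tfrac{\dim(M_{\max})}{2}-1$, so that the conclusion $\lambda_Y=n-1$ forces both $M_{\max}$ and $Y$ to be isolated. But your plan to \emph{derive} $\dim(M_{\max})=\dim_{\mathbb{C}}(Y)=0$ from the stated hypotheses via Duistermaat--Heckman cannot succeed, because that implication is false. Consider $\mathbb{CP}^3$ with the action $\lambda\cdot[z_0:z_1:z_2:z_3]=[z_0:\lambda z_1:\lambda z_2:\lambda^2 z_3]$: the minimum and maximum are isolated fixed points, the unique internal component is $Y=\{z_0=z_3=0\}\cong\mathbb{CP}^1$ (so it vacuously satisfies the hypothesis of the second statement), and its normal weights at $[0:1:0:0]$ are $-1$ and $1$, whence $\lambda_Y=1\neq n-1=2$. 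Thus the second statement is false as literally written; the hypotheses that $M_{\max}$ and $Y$ are isolated must be \emph{added}, not deduced, and no reduced-space analysis will close this gap. (These extra hypotheses do hold at the one place the second statement is invoked, namely in the proof of Theorem \ref{main}, where $Y$ is an isolated point $p$ with $\lambda_p=2$ and $M_{\max}$ is isolated.) Once they are assumed, your own computation finishes immediately: $n-\lambda_Y=\lambda_Y^{(-H)}\leq 1$ and $\lambda_Y\leq n-1$ give $\lambda_Y=n-1$. So the correct course is to discard the Duistermaat--Heckman sketch and instead strengthen the hypotheses of the claim you are proving.
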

\begin{proof}
The first claim follows from applying Lemma \ref{hamiltonianlemma}. The second statement follows from applying Lemma \ref{hamiltonianlemma} to the Hamiltonian $S^1$-action with generating vector field $-X$ and Hamiltonian $-H$.\end{proof}

A theorem of Ono states that is closed symplectic manifold $(M,\omega)$ satisfies $c_1 = k [\omega]$ $k \in \mathbb{R}$ then if $k \leq 0$ $(M,\omega)$ cannot have a non-trivial Hamiltonian $S^1$-action \cite[Proposition 1.1]{On}.  It was also proved that if $k<0$ that $(M,\omega)$ cannot have a symplectic $S^1$-action \cite[Theorem 1]{On}.

\begin{lemma}  \label{symfano}  Suppose that $(M,\omega)$ is a closed symplectic manifold with a Hamiltonian $S^1$-action and $b_{2}(M)=1$. Then after possibly rescaling the symplectic form by a positive constant we may assume that $c_{1}(M)=[\omega]$.
\end{lemma}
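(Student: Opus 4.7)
The plan is to use the dimensional constraint on $H^2(M;\mathbb{R})$ imposed by $b_2(M) = 1$ together with Ono's theorem quoted just above.

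First, I would observe that since $b_2(M) = 1$, the vector space $H^2(M;\mathbb{R})$ is one-dimensional, and the class $[\omega]$ is a nonzero element of it (because $[\omega]^n$ pairs nontrivially with the fundamental class, as $\omega^n$ is a volume form). Hence $[\omega]$ spans $H^2(M;\mathbb{R})$, and there exists a unique $k \in \mathbb{R}$ with $c_1(M) = k[\omega]$.

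Next, I would invoke Ono's theorem (cited in the paragraph preceding the lemma): since $(M,\omega)$ admits a non-trivial Hamiltonian $S^1$-action, the constant $k$ cannot be nonpositive. Thus $k > 0$.

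Finally, rescaling: set $\omega' = k\omega$. This is a positive rescaling of $\omega$, and $[\omega'] = k[\omega] = c_1(M)$, which is the desired conclusion. The only step where something nontrivial is used is the appeal to Ono's result to rule out $k \leq 0$; the rest is purely linear-algebraic from the assumption $b_2(M)=1$, so I do not anticipate any obstacle beyond citing that input correctly.
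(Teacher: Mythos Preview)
Your proposal is correct and follows exactly the same approach as the paper: use $b_2(M)=1$ to write $c_1(M)=k[\omega]$, then apply Ono's result \cite[Proposition 1.1]{On} to conclude $k>0$, and rescale. The paper's proof is simply a terse version of what you wrote.
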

\begin{proof}
Since $b_{2}(M)=1$, $c_{1}(M) = k [\omega]$ for some $k \in \mathbb{R}$. Therefore the result follows immediately from \cite[Proposition 1.1]{On}.\end{proof}

For Hamiltonian $S^1$-actions on Fano varieties, the following normalization for the Hamiltonian goes back to Futaki \cite[Theorem 1]{Fu}. In the case of monotone symplectic manifolds proofs may be found for the semi-free case in \cite{CK2} and for the general case in \cite{GVS}. \footnote{ In \cite[Proposition 1.9]{LP1} the formula was proven for closed symplectic manifolds with Hamiltonian $S^1$-actions, such that $c_1$ and $[\omega]$ have the same evaluation on classes in the image of the Hurewicz map $\pi_{2}(M) \rightarrow H_{2}(M)$, this will not be needed in this paper.}

\begin{theorem} \label{wsf}
Suppose that $(M,\omega)$ is a closed symplectic manifold with $c_{1}(M) = [\omega]$ with a Hamiltonian $S^1$-action with Hamiltonian $H: M \rightarrow \mathbb{R}$. Then, after possibly adding a constant to the Hamiltonian, for each fixed component $F$, $$ H(F) = -\sum_{i=1}^{n} w_i ,$$ where $w_i$ are the weights along $F$.
\end{theorem}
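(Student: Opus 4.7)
The plan is to argue via $S^1$-equivariant cohomology in the Cartan model. Let $u \in H^2(BS^1;\mathbb{R})$ be the degree $2$ generator. Under the sign convention $\omega(X_t, \cdot) = -d(\mu(t))$ from Subsection \ref{hamact}, the form $\widetilde{\omega} := \omega + H u$ is equivariantly closed, and therefore defines a class $[\widetilde{\omega}] \in H^2_{S^1}(M;\mathbb{R})$ whose image under the forgetful map $H^2_{S^1}(M;\mathbb{R}) \to H^2(M;\mathbb{R})$ is $[\omega]$. On the other hand, the tangent bundle $TM$ carries a canonical equivariant first Chern class $c_1^{S^1}(TM) \in H^2_{S^1}(M;\mathbb{R})$ whose image under the forgetful map is $c_1(TM)$.

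Since $M$ is connected and $H^1(BS^1;\mathbb{R}) = 0$, the low-degree part of the Leray--Serre spectral sequence for the fibration $M \hookrightarrow M_{S^1} \to BS^1$ identifies the kernel of the degree $2$ forgetful map with $\mathbb{R}\cdot u$. The hypothesis $c_1(M) = [\omega]$ therefore gives
\[
c_1^{S^1}(TM) \;=\; [\widetilde{\omega}] + k u
\]
for some unique $k \in \mathbb{R}$. Replacing $H$ by $H - k$ (which is precisely the indeterminacy in choosing a Hamiltonian on a connected space) arranges $c_1^{S^1}(TM) = [\omega + H u]$ as classes in $H^2_{S^1}(M;\mathbb{R})$.

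The last step is to restrict this equality along the inclusion $F \hookrightarrow M$ of a fixed component. Because $S^1$ acts trivially on $F$, there is a natural splitting $H^*_{S^1}(F;\mathbb{R}) \cong H^*(F;\mathbb{R})[u]$, and since $H$ is constant on $F$ the right-hand side restricts to $[\omega|_F] + H(F) u$. For the left-hand side, split $TM|_F = TF \oplus \nu F$ where $\nu F$ is the normal bundle; the latter decomposes equivariantly into complex line subbundles whose weights are the nonzero weights $w_i$ of the $S^1$-representation on $T_pM$ for $p \in F$. A standard computation yields $c_1^{S^1}(TM)|_F = c_1(TM|_F) + \bigl(\sum_{i=1}^n w_i\bigr)\, u$, where the vanishing tangential weights contribute nothing. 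Equating the coefficients of $u$ on both sides then produces the stated formula $H(F) = -\sum_{i=1}^n w_i$, with the overall sign being dictated by the paper's convention $\omega(X_t, \cdot) = -d(\mu(t))$ in the Cartan differential.

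The main point to be careful about is the bookkeeping of sign conventions (Cartan differential, moment map sign, and equivariant Chern class versus weight), but all of these are fixed consistently at the beginning of Section \ref{prel}. The conceptual content of the argument is the single observation that $c_1^{S^1}(TM)$ and $[\omega + Hu]$ are two equivariant lifts of the same ordinary class $[\omega]$ and hence differ only by an element of $\mathbb{R}\cdot u$, which is absorbed into the normalization of $H$.
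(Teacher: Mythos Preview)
The paper does not supply its own proof of this statement; it is quoted with references to \cite{Fu}, \cite{CK2}, \cite{GVS} and, in a footnote, \cite{LP1}. Your argument via equivariant cohomology is the standard one and is essentially what appears in those references, so at the level of strategy there is nothing to compare.

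That said, there is a concrete inconsistency in what you have written. You assert that $\widetilde{\omega}=\omega+Hu$ restricts on $F$ to $[\omega|_F]+H(F)\,u$ and that $c_1^{S^1}(TM)|_F=c_1(TM|_F)+\bigl(\sum_i w_i\bigr)u$, and then claim that equating the $u$-coefficients yields $H(F)=-\sum_i w_i$. Taken literally, those two displayed formulas give $H(F)=+\sum_i w_i$. One of the two restriction formulas must carry an additional sign; which one depends on whether the Cartan differential is $d+u\,\iota_X$ or $d-u\,\iota_X$ and on how $u$ is identified with the equivariant Euler class of the weight-$(\pm 1)$ representation. Your closing caveat about ``bookkeeping of sign conventions'' does not resolve this: the issue is not that a convention remains to be fixed, but that the two formulas you wrote are mutually incompatible under \emph{any} single convention. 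The paper's own $\mathbb{CP}^4$ example immediately after Theorem~\ref{main} (where $M_{\min}$ has weights $1,1,1,2$ and $H(M_{\min})=-5$) confirms that the target sign is indeed $H(F)=-\sum_i w_i$, so exactly one of your two restriction identities needs its sign reversed. Once that is corrected the argument is complete.
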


The following theorem is due to Ohta and Ono, and builds on fundamental work of Gromov, McDuff and Taubes. 

\begin{theorem} \label{ooth} \cite{OO}
Let $(N,\omega)$ be a closed symplectic $4$-manifold such that $[\omega] = k c_{1}(N)$ for some $k > 0$. Then, $N$ is diffeomorphic to a del Pezzo surface. 
\end{theorem}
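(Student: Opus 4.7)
The plan is to leverage Taubes' Seiberg-Witten theory together with the McDuff-Liu structure theorem for symplectic $4$-manifolds with $b_{+}=1$. The starting observations follow immediately from the hypothesis $[\omega]=kc_{1}(N)$ with $k>0$: one has
$$c_{1}(N)\cdot [\omega] = k^{-1}[\omega]^{2} > 0 \quad \text{and} \quad c_{1}(N)^{2} = k^{-2}[\omega]^{2} > 0.$$

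The first substantive step is to show $b_{+}(N)=1$. By a theorem of Taubes, whenever a closed symplectic $4$-manifold has $b_{+}\geq 2$, the canonical class $K_{N}=-c_{1}(N)$ is a Seiberg-Witten basic class, and Taubes' inequality yields $K_{N}\cdot [\omega]\geq 0$, i.e.\ $c_{1}(N)\cdot [\omega]\leq 0$. This directly contradicts the monotonicity pairing above, so $b_{+}(N)=1$.

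The second step invokes the structure theorem for symplectic $4$-manifolds with $b_{+}=1$ (Gromov, Taubes, McDuff, Lalonde-McDuff, Li-Liu): after a finite sequence of symplectic blow-downs along exceptional $-1$ spheres, $N$ is symplectomorphic to either $\mathbb{CP}^{2}$ with a multiple of the Fubini-Study form, or a symplectic $S^{2}$-bundle over a Riemann surface $\Sigma_{g}$ of some genus $g\geq 0$. One rules out the irrational ruled case by testing the fiber class $F$: on any $S^{2}$-bundle over $\Sigma_{g}$ the adjunction formula gives $c_{1}(N)\cdot F = 2$ regardless of $g$, but the existence of a section class $S$ with $c_{1}(N)\cdot S = 2-2g$ together with $[\omega]=kc_{1}(N)$ would force $\omega(S)=k(2-2g)\leq 0$ whenever $g\geq 1$, contradicting that a section represents a symplectic class paired positively with $[\omega]$. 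Hence $N$ is rational, so smoothly one of $\mathbb{CP}^{2}$, $S^{2}\times S^{2}$, or $\mathbb{CP}^{2}\#k\overline{\mathbb{CP}^{2}}$. The bound $k\leq 8$ is forced by $c_{1}^{2}(N)=9-k>0$, leaving exactly the smooth manifolds underlying del Pezzo surfaces.

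The main obstacle is the appeal to the $b_{+}=1$ structure theorem, which rests on the deep Seiberg-Witten/Gromov-Taubes machinery; once that is invoked as a black box, the remaining work is the pairing computations above which exploit the very restrictive condition $[\omega]=kc_{1}(N)$.
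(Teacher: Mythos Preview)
The paper does not prove this theorem at all: it is quoted as a black-box result of Ohta--Ono \cite{OO}, and only its immediate corollary ($b_2^{+}=1$) is derived in the paper. So there is no ``paper's own proof'' to compare your attempt against.

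That said, your outline is the standard route (Taubes $\Rightarrow$ $b_+=1$; Liu's theorem $\Rightarrow$ rational or ruled; then eliminate the irrational ruled case and bound the blow-ups). Two points deserve tightening. First, what you call ``the structure theorem for $b_+=1$'' is really Liu's theorem, whose hypothesis is $K\cdot[\omega]<0$ (which you have), not merely $b_+=1$; there exist symplectic $4$-manifolds with $b_+=1$ that are neither rational nor ruled. Second, and more substantively, your section argument is applied to the minimal model obtained after blow-downs, but the monotonicity relation $[\omega]=kc_1$ lives on $N$, not on the blow-down; the blown-down form need not be monotone, so the line ``$\omega(S)=k(2-2g)$'' is not justified as written. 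The cleanest repair avoids sections entirely: from monotonicity you already computed $c_1(N)^2=k^{-2}[\omega]^2>0$, and since the symplectic canonical class on a rational or ruled $4$-manifold is the standard one (Li--Liu), one has $c_1^2=8(1-g)-m$ in the ruled case over $\Sigma_g$ blown up $m$ times. This is $\le 0$ whenever $g\ge 1$, eliminating the irrational case directly, and for $g=0$ forces $m\le 8$, giving exactly the del Pezzo diffeomorphism types.
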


The theorem has the following immediate consequence.

\begin{corollary} \label{oocor}

Let $(N,\omega)$ be a closed symplectic $4$-manifold such that $[\omega] = k c_{1}(N)$ for some $k > 0$. Then, $b_{2}^{+}(N)=1$.
\end{corollary}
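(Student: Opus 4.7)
The plan is to reduce immediately to Theorem \ref{ooth} and then perform a short case-check. Since $b_{2}^{+}$ is a diffeomorphism invariant of an oriented closed $4$-manifold (it is determined by the intersection form on $H^{2}(N,\mathbb{R})$), and by Theorem \ref{ooth} the manifold $N$ is diffeomorphic to a del Pezzo surface, it suffices to verify $b_{2}^{+} = 1$ for each possible diffeomorphism type on the del Pezzo list: $\mathbb{CP}^2$, $\mathbb{CP}^1 \times \mathbb{CP}^1$, and $\mathbb{CP}^2 \# k \overline{\mathbb{CP}^2}$ for $1 \le k \le 8$.

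For $\mathbb{CP}^2$ we have $b_{2}=1$ and $\sigma=1$, so $b_{2}^{+}=1$. For $\mathbb{CP}^1 \times \mathbb{CP}^1$ the intersection form is the standard hyperbolic form, whose positive part is $1$-dimensional. For each blow-up $\mathbb{CP}^2 \# k \overline{\mathbb{CP}^2}$ the intersection form is the orthogonal sum of $\langle 1\rangle$ with $k$ copies of $\langle -1 \rangle$, which again has $b_{2}^{+}=1$.

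No step here is delicate; the only substantive input is Theorem \ref{ooth}, which the corollary is an immediate formal consequence of. I do not anticipate any obstacle beyond recalling the signatures of the del Pezzo list.
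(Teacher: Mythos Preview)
Your proof is correct and follows exactly the same approach as the paper's own proof: invoke Theorem~\ref{ooth} to reduce to the del Pezzo list, then check $b_{2}^{+}=1$ case by case. The paper merely says ``one may verify by direct calculation,'' while you carry out that calculation explicitly, but the argument is identical.
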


\begin{proof}
By Theorem \ref{ooth}, $N$ is diffeomorphic to a smooth del Pezzo surface. By the classification of del Pezzo surfaces, $N$ is either diffeomorphic to $\mathbb{CP}^1 \times \mathbb{CP}^1 $ or $\mathbb{CP}^2 \# k\overline{\mathbb{CP}^2}$, where $0 \leq k \leq 8$. One may verify by direct calculation that each of these manifolds satisfies  $b_{2}^{+}=1$.\end{proof}
\subsection{Complete Intersections} \label{completeint}
We start this subsection by recalling a basic definition of algebraic geometry. Let  $X \subset \mathbb{CP}^{n+k}$ be a smooth subvariety of complex dimension $n$.

\begin{definition}\label{cidef} $X$ is called a \textbf{complete intersection} if it is a  transversal intersection of $k$ smooth hypersurfaces. \end{definition} Since we restrict to the smooth case transversality can be taken simply in the sense of complex submanifolds, but also has algebraic reformalizations that are useful; such as in terms of gradients or the ideal associated to $X$.  Let $\{d_{i}|\;i=1,\ldots, k \}$ be the multiset consisting of the degrees of the $k$ hypersurfaces. Let $X_{m}(d_{1},\ldots,d_{k})$ denote a smooth complete intersection in $\mathbb{CP}^{k+m}$ with multidegree  $(d_{1},\ldots,d_{k})$.  A well-known consequence of Ehresmann's Theorem is that the diffeomorphism type of $X_{m}(d_{1},\ldots,d_{k})$ depends only on the dimension and the multidegree. Hence as a smooth manifold the notation $X_{m}(d_{1},\ldots,d_{k})$ is unambiguous. Note however, that complete intersections with different multidegree can sometimes be diffeomorphic, a simple example being furnished by $X_{1}(3) \cong X_{1}(2,2) \cong S^1 \times S^1$. Another basic fact following from the adjunction formula is that a complete intersection is Fano resp. generalized Calabi-Yau resp. general type, when $\sum_{i} d_i$ is less than resp. equal to resp. greater than $n+1$.

By applying the Lefschetz hyperplane theorem consecutively on the image of the variety via appropriate Veronese embeddings, the following holds:
\begin{lemma} \label{betti}  Let $X$ be a smooth complete intersection of complex dimension $n$, then $b_{i}(X) = b_{i}(\mathbb{CP}^{n}) \text{ for }i \neq n.$
\end{lemma}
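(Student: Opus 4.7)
The plan is to iteratively apply the Lefschetz hyperplane theorem, cutting $\mathbb{CP}^{n+k}$ down to $X$ one hypersurface at a time. Since the cutting hypersurfaces $H_1,\ldots,H_k$ have degrees $d_1,\ldots,d_k$ rather than all being linear, the main technical device is the $d$-uple Veronese embedding $\nu_d \colon \mathbb{CP}^N \hookrightarrow \mathbb{CP}^{\binom{N+d}{d}-1}$, under which any degree $d$ hypersurface in $\mathbb{CP}^N$ is recovered as the preimage of a hyperplane; this lets the hyperplane version of the Lefschetz theorem apply to $\nu_d$-images of smooth subvarieties.

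Concretely, set $X_0 := \mathbb{CP}^{n+k}$ and $X_j := H_1 \cap \ldots \cap H_j$, so that $X_k = X$ and $\dim_{\mathbb{C}}(X_j) = n+k-j$. Since the diffeomorphism type of $X$ depends only on the multidegree (Ehresmann's theorem), we may choose the $H_j$ generically so that every intermediate $X_j$ is smooth, as is ensured by Bertini's theorem. Applying the Lefschetz hyperplane theorem to the smooth hyperplane section $\nu_{d_{j+1}}(X_{j+1}) \subset \nu_{d_{j+1}}(X_j)$ produces an isomorphism $H^i(X_j;\mathbb{Q}) \cong H^i(X_{j+1};\mathbb{Q})$ for all $i < \dim_{\mathbb{C}}(X_j) - 1 = n+k-j-1$. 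Composing these for $j = 0, 1, \ldots, k-1$, the most restrictive constraint comes from the final step ($j = k-1$), giving $H^i(\mathbb{CP}^{n+k};\mathbb{Q}) \cong H^i(X;\mathbb{Q})$ for all $i \le n-1$.

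In the range $i \le n-1$ both $\mathbb{CP}^{n+k}$ and $\mathbb{CP}^n$ have $b_i = 1$ when $i$ is even and $b_i = 0$ when $i$ is odd, so $b_i(X) = b_i(\mathbb{CP}^n)$ for $i < n$. The case $i > n$ follows from Poincar\'{e} duality applied to both $X$ and $\mathbb{CP}^n$: for $i \ge n+1$ one has $2n-i \le n-1$, whence $b_i(X) = b_{2n-i}(X) = b_{2n-i}(\mathbb{CP}^n) = b_i(\mathbb{CP}^n)$. The argument is essentially routine; the only mildly subtle point is ensuring the smoothness of the intermediate partial intersections $X_j$, which is handled by the Bertini/Ehresmann reduction above (and is necessary because the hyperplane form of Lefschetz, as used here, requires a smooth ambient variety at each step).
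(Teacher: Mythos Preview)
Your proof is correct and follows exactly the approach the paper indicates: iterated application of the Lefschetz hyperplane theorem via Veronese embeddings, combined with Poincar\'{e} duality for the range $i>n$. Your only addition is the explicit Bertini/Ehresmann reduction to guarantee smoothness of the intermediate intersections $X_j$, which is a reasonable care point since transversality of the full intersection at points of $X$ does not automatically give smoothness of partial intersections away from $X$.
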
 The topological consequences of this fact have a different flavor when $n$ is odd or even. In both cases, there is an important characterization of when a complete intersection is ``close to projective space", which is due to Ewing and Moolgavkar  when $n$ is odd \cite[Corollary 5]{EM} and Libgober and Wood  when $n$ is even  \cite[Corollary 6.1]{LW}.

In the case when $n$ is odd, the middle degree cohomology $H^{n}(M,\mathbb{R})$ has the structure of a symplectic vector space given by the intersection product and does not contain cup products of cohomology classes of other degrees. Thus the ``odd part" of the cohomology ring of $M$ is essentially determined by $b_{n}(M) \in 2 \mathbb{Z}$. A complete characterization of when this part of the cohomology ring vanishes, as it does for projective space, was given by Ewing and Moolgavkar \cite{EM}. 

\begin{theorem} \label{betichi} \cite[Corollary 5]{EM}
Let  $X$ be a complete intersection with odd complex dimension $n$, then $\chi(M) = n+1$ if and only if $X$ is diffeomorphic to  $X_{n}(1)$ or $X_{n}(2)$.
\end{theorem}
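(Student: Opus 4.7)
The plan is to first reduce the Euler characteristic condition to a purely cohomological one on the middle Betti number, and then analyze the resulting Diophantine equation on multidegrees. By Lemma \ref{betti}, for a complete intersection $X$ of odd complex dimension $n$ the Betti numbers $b_i(X)$ with $i \neq n$ coincide with those of $\mathbb{CP}^n$. Hence all $n+1$ even-indexed Betti numbers (for $i = 0, 2, \ldots, 2n$, none of which equal the odd integer $n$) equal $1$, and all odd-indexed Betti numbers vanish except possibly $b_n(X)$. Consequently $\chi(X) = (n+1) - b_n(X)$, so that $\chi(X) = n+1$ is equivalent to $b_n(X) = 0$.

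The forward direction is then immediate: $X_n(1) = \mathbb{CP}^n$ has $b_n = 0$ trivially since $n$ is odd, and the smooth odd-dimensional quadric $X_n(2)$ has the integral cohomology ring of $\mathbb{CP}^n$ (via a standard CW decomposition of smooth quadric hypersurfaces), whence $b_n(X_n(2)) = 0$.

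For the converse I would use the standard Chern-class computation $c(X) = (1+h)^{n+k+1}/\prod_{i=1}^k (1+d_i h)|_X$ together with $\int_X h^n = d_1 \cdots d_k$ to write the Euler characteristic as the coefficient extraction
$$\chi(X_n(d_1, \ldots, d_k)) = d_1 \cdots d_k \cdot [z^n] \frac{(1+z)^{n+k+1}}{\prod_{i=1}^k (1+d_i z)}.$$
Since intersecting with a hyperplane of degree $1$ preserves the ambient diffeomorphism type, one may assume all $d_i \geq 2$. The condition $\chi(X) = n+1$ then translates into an explicit Diophantine equation in $(d_1, \ldots, d_k)$, and the task is to show that its only solutions are the empty multidegree (giving $\mathbb{CP}^n$) and $(d_1) = (2)$.

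The main obstacle is this last combinatorial step: one must rigorously establish the strict inequality $\chi(X_n(d_1, \ldots, d_k)) > n+1$ for every other multidegree with all $d_i \geq 2$. The expected strategy is an induction on $k$ and on $\max_i d_i$, using the generating-function identity above; the sign changes produced by the odd parity of $n$ in the expansion of $1/(1+d_i z)$ obstruct the na\"{i}ve positivity arguments that work cleanly in the even-dimensional case, so careful grouping of terms, or a parallel analysis via the Hirzebruch $\chi_y$-genus evaluated at a fixed $y$, is likely required. Once this Diophantine reduction is achieved, the fact (recorded just after Definition \ref{cidef}) that the diffeomorphism type of a complete intersection depends only on its dimension and multidegree concludes the argument.
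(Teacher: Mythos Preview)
The paper does not prove this theorem; it is quoted from Ewing--Moolgavkar \cite{EM} and invoked as a black box in the proof of Theorem~\ref{symplcirc}. So there is no paper proof to compare against, and your proposal should be evaluated on its own merits.

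Your reduction via Lemma~\ref{betti} to the equivalence $\chi(X)=n+1 \iff b_n(X)=0$ is correct, as is the generating-function expression for $\chi$ coming from the Chern class of a complete intersection; this is indeed the framework Ewing and Moolgavkar use. However, there is a sign slip in your statement of what remains to be shown: since $\chi(X)=(n+1)-b_n(X)$ and $b_n(X)\ge 0$, one always has $\chi(X)\le n+1$, so for the converse direction the required inequality is $\chi(X_n(d_1,\ldots,d_k))<n+1$ (equivalently $b_n>0$) for every multidegree other than the empty one and $(2)$, not $\chi>n+1$ as you wrote.

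More substantively, your proposal is explicitly a plan rather than a proof: you correctly isolate the Diophantine analysis of the generating-function coefficient as the crux, but do not carry it out, and you acknowledge that the alternating signs for odd $n$ obstruct the naive argument. That combinatorial step \emph{is} the entire content of \cite[Corollary~5]{EM}; absent an actual argument (whether by induction, by bounding the $\chi_y$-genus, or otherwise), the proposal does not yet constitute a proof.
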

 
In the case when $n$ is even, then the intersection form on $H^{n}(M,\mathbb{R})$ is symmetric, and $H^{n}(M,\mathbb{R})$ plays a more interesting role in determining the cohomology ring of $M$. The most fundamental invariant we can extract from the intersection form is the signature $\sigma(M)$. A property satisfied by projective space is that the signature is equal to the alternating sum of even Betti numbers, as was shown by Libgober and Wood, this almost gives a characterization of projective space among complete intersections. 

\begin{theorem} \cite[Corollary 6.1]{LW} \label{lw}
Let $X$ be a complete intersection with even complex dimension $n$. Then $$\sigma(X) = \sum_{k \in \mathbb{Z}}( b_{4k}(X) - b_{4k+2}(X))$$ if and only if $X$ is diffeomorphic to $X_{n}(1), X_{n}(2), X_{2}(3)$ or $X_{n}(2,2)$.  That is: \begin{itemize}
\item[a.)] If $4|n$, the intersection form is positive definite if and only $X$ is diffeomorphic to $X_{n}(1), X_{n}(2)$ or $X_{n}(2,2)$

\item[b.)] If $n = 2 \mod 4$, $b_{n}^{+}(X)$=1 if and only $X$ is diffeomorphic to $X_{n}(1), X_{n}(2), X_{n}(2,2)$ or $X_{2}(3)$
\end{itemize}
\end{theorem}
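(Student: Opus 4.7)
The plan is to first translate the biconditional into a condition purely on $\sigma(X)$ and $b_n(X)$, and then invoke Hirzebruch's signature formula together with the Lefschetz hyperplane theorem to constrain the multidegree.

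By Lemma \ref{betti}, $b_{2i}(X) = 1$ and $b_{2i+1}(X) = 0$ for $2i \neq n$, so most of the terms in $\sum_k(b_{4k}(X) - b_{4k+2}(X))$ cancel in pairs. A short case analysis on $n \bmod 4$ gives: when $4 \mid n$, the middle index falls into a $b_{4k}$ slot and the sum collapses to $b_n(X)$, so the equation becomes $\sigma(X) = b_n(X)$, equivalent (since $\sigma = b_n^+ - b_n^-$ and $b_n = b_n^+ + b_n^-$) to $b_n^-(X) = 0$, i.e.\ positive definiteness; when $n \equiv 2 \pmod 4$, the middle index falls into a $b_{4k+2}$ slot and the sum collapses to $2 - b_n(X)$, so the equation becomes $\sigma(X) = 2 - b_n(X)$, equivalent to $b_n^+(X) = 1$. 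This yields the reformulations (a) and (b).

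For the main content, I would use Hirzebruch's signature theorem to express $\sigma(X_n(d_1,\ldots,d_k))$ as the coefficient of $z^n$ in a generating function assembled from the $L$-class of the tangent bundle, which by the adjunction formula for a complete intersection in $\mathbb{CP}^{n+k}$ factors into a product over the $d_i$. An analogous generating function computes $\chi(X_n(d_1,\ldots,d_k))$, and hence $b_n(X)$ through Lemma \ref{betti}. Substituting these into the reduced equations of the previous paragraph produces, for each even $n$, a polynomial identity in $(d_1,\ldots,d_k)$ which the admissible multidegrees must satisfy.

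The main obstacle is the arithmetic/asymptotic analysis of this identity: showing that its only solutions are the listed ones. I would first handle the hypersurface case $k=1$, where the one-variable generating function is explicit enough to enumerate small $(n,d)$ directly and where the dominant term as $d \to \infty$ drives $\sigma$ sufficiently negative (relative to $b_n$) to violate the positivity requirement. The general case $k \geq 2$ would be handled by iteration, viewing $X_n(d_1,\ldots,d_k)$ as a hypersurface of degree $d_k$ inside $X_{n+1}(d_1,\ldots,d_{k-1})$ and propagating the constraints through the multiplicativity built into Hirzebruch's generating function. In this analysis the cubic surface $X_2(3) \cong \mathbb{CP}^2 \# 6\overline{\mathbb{CP}^2}$ emerges as an isolated low-dimensional exception that fails to persist beyond $n = 2$. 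The converse direction is a direct check for each listed manifold, using the explicit cohomology rings of projective space, quadrics, intersections of two quadrics, and the cubic surface.
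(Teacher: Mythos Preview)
Your first paragraph exactly reproduces what the paper proves: the paper only establishes the equivalence of the displayed signature equation with the reformulations (a) and (b), by writing $\sigma(X)=b_n^+(X)-b_n^-(X)$, $b_n(X)=b_n^+(X)+b_n^-(X)$ and invoking Lemma~\ref{betti}. Your computation of how the alternating sum collapses to $b_n(X)$ or $2-b_n(X)$ depending on $n\bmod 4$ is correct and matches this.

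The remainder of your proposal attempts something the paper does not: the paper \emph{cites} the classification of multidegrees from Libgober--Wood \cite[Corollary~6.1]{LW} rather than reproving it. So your generating-function sketch is going beyond the scope of what is required here. That said, your outline is broadly faithful to how Libgober and Wood actually proceed (signature and Euler characteristic as coefficients in explicit generating functions built from the multidegree, followed by an arithmetic analysis), but you correctly flag that the hard part is the asymptotic/arithmetic step, and your sketch does not supply it. In particular, the iterative reduction ``view $X_n(d_1,\ldots,d_k)$ as a hypersurface in $X_{n+1}(d_1,\ldots,d_{k-1})$ and propagate constraints'' is not how the argument in \cite{LW} is organized, and it is not clear this induction closes without further input; Libgober--Wood instead work directly with the generating function $\prod_i \frac{(1+y)^{d_i}-(1-y)^{d_i}}{(1+y)^{d_i}+(1-y)^{d_i}}$ and exploit its factored structure to bound solutions uniformly in $k$. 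If you want a self-contained proof you would need to fill in that analysis; for the purposes of this paper, citing \cite{LW} as done is sufficient.
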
 

The proof of the equivalence of a.)-b.) with the first statement, follows from writing $\sigma(X) = b_{n}^{+}(X)-b_{n}^{-}(X)$, $b_{n}(X) = b_{n}^{+}(X)+b_{n}^{-}(X)$ and then applying Lemma \ref{betti}.

\begin{remark} A beautiful fact, proved by Deligne  \cite[Page 339]{D} is that when $4|n$ the intersection form of $X_{n}(2,2)$ is the  Korkine-Zolotareff form $\Gamma_{n+4}$. That is the lattice of $\mathbb{R}^{n+4}$ generated by $e_{i}+e_{j}$ and $\frac{1}{2}(e_{1} + \ldots + e_{n+4}).$ In particular for $n=4$ the intersection form is isomorphic to the $E_{8}$ lattice. Deligne gives a Dynkin diagram description of the intersection form for $n = 2 \mod 4$, and it is clear from this description that $b^{+}_{n}(X_{n}(2,2)) = 1$. \end{remark}

Finally, we state one of the major motivating results of this article due to Benoist. Let $X$ be a smooth projective variety, by a well-known relative version of Chow's theorem the group of bihomomorphisms of $X$ coincides with the group of isomorphisms of $X$, we denote it by $Aut(X)$.

\begin{theorem}  \cite[Theorem 3.1]{B} \label{benoist}
Let $X$ be a complex smooth complete intersection of dimension at least two. Then $|Aut(X)| = \infty$ if and only $X$ is isomorphic to $X_{n}(1)$ or $X_{n}(2)$.
\end{theorem}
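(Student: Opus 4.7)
The forward direction is immediate: $X_n(1)=\mathbb{P}^n$ carries the faithful action of $PGL(n+1,\mathbb{C})$, and the smooth quadric $X_n(2)$ carries the action of $PO(n+2,\mathbb{C})$, both of positive dimension. The content of the theorem is the converse, and my plan is to identify the identity component of $Aut(X)$ with an algebraic subgroup of $PGL(n+k+1,\mathbb{C})$ and then rule out positive-dimensional stabilizers in every remaining case.

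First I would observe that since $X$ is a smooth projective variety with canonical polarization $\mathcal{O}_X(1)$, the identity component $Aut(X)^0$ is a linear algebraic group whose Lie algebra is $H^0(X,T_X)$, so $|Aut(X)|=\infty$ forces $H^0(X,T_X)\neq 0$ and it suffices to prove the converse vanishing. To analyze this space I would combine the normal bundle sequence
$$0\to T_X \to T_{\mathbb{P}^{n+k}}\big|_X \to \bigoplus_{i=1}^{k}\mathcal{O}_X(d_i) \to 0$$
with the restricted Euler sequence
$$0\to \mathcal{O}_X \to \mathcal{O}_X(1)^{\oplus(n+k+1)} \to T_{\mathbb{P}^{n+k}}\big|_X \to 0.$$
Because $\dim X \geq 2$, iterated use of the Lefschetz hyperplane theorem gives $H^0(X,\mathcal{O}_X(1))\cong H^0(\mathbb{P}^{n+k},\mathcal{O}(1))$ and $H^1(X,\mathcal{O}_X)=0$, so $H^0(X,T_{\mathbb{P}^{n+k}}|_X)\cong \mathfrak{pgl}(n+k+1)$. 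The resulting long exact sequence then identifies $H^0(X,T_X)$ with the Lie subalgebra $\mathfrak{g}_X\subset\mathfrak{pgl}(n+k+1)$ of infinitesimal projective transformations preserving the homogeneous ideal $(f_1,\ldots,f_k)$ cutting out $X$.

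The remaining, and most delicate, step is to show that $\mathfrak{g}_X=0$ unless $X$ is a projective space or a smooth quadric. Writing $v(f_i)=\sum_j c_{ij} f_j$ with $c_{ij}\in\mathbb{C}[x_0,\ldots,x_{n+k}]_{d_i-d_j}$ (zero whenever $d_j>d_i$), the condition becomes a linear constraint on the matrix entries of $v$. For a single hypersurface of degree $d\geq 3$ this is the classical Matsumura-Monsky result: a nonzero $v\in\mathfrak{g}_X$ integrates to a $\mathbb{C}^*$-action on $\mathbb{P}^{n+1}$ under which $f$ decomposes as a sum of monomial eigenvectors, a form incompatible with smoothness when $d\geq 3$. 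I would extend this by induction on the codimension $k$: diagonalize a hypothetical one-parameter subgroup of $\mathrm{Stab}(X)\subset PGL(n+k+1)$ so that each $f_i$ becomes weighted-homogeneous in the common eigenbasis, and then derive a contradiction from the Jacobian criterion along the fixed locus of the $\mathbb{C}^*$-action on $X$, unless the multidegree collapses to $(1)$ or $(2)$. The principal obstacle is to make the Jacobian-rank argument uniform across all admissible $(n,k,d_1,\ldots,d_k)$; I would expect that a short list of small multidegrees (such as $(2,2)$ in low dimension) needs separate direct verification that the constraint system above forces $v=0$.
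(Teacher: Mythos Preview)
The paper does not prove this theorem at all: it is quoted verbatim from Benoist \cite[Theorem 3.1]{B} as external input, with no argument given. So there is no ``paper's own proof'' to compare against; what you have written is an independent attempt at a result the author treats as a black box.

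On the substance of your sketch, two points deserve care. First, your reduction from $|Aut(X)|=\infty$ to $H^0(X,T_X)\neq 0$ assumes that an infinite automorphism group forces a positive-dimensional identity component. This is not automatic for a projective variety: $Aut(X)$ is only locally of finite type in general, and the component group can be infinite. You need the extra observation that for a complete intersection of dimension $\geq 3$ the Lefschetz theorem gives $\mathrm{Pic}(X)\cong\mathbb{Z}$, so every automorphism preserves $\mathcal{O}_X(1)$ and hence $Aut(X)$ embeds in $PGL(n+k+1)$, forcing finite type; the surface case needs a separate remark. Second, and more seriously, the step you flag as the ``principal obstacle'' is the whole theorem: the cohomological bookkeeping with the Euler and normal sequences is routine, and everything rests on showing that a nontrivial one-parameter subgroup of $PGL$ cannot stabilize a smooth complete intersection outside the two listed cases. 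Your proposed Jacobian-rank argument along the $\mathbb{C}^*$-fixed locus is plausible in spirit (and is how Matsumura--Monsky handles hypersurfaces), but you have not indicated how the induction on codimension actually runs or why the exceptional list stops at $(2)$ rather than, say, $(2,2)$. As written this is a strategy, not a proof.

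For context, Benoist's own argument is organised differently: he works with the moduli stack of smooth complete intersections and proves it is separated and Deligne-Mumford, which amounts to showing the automorphism group schemes are finite. The infinitesimal computation $H^0(X,T_X)=0$ is one ingredient, but the finiteness of the full group is packaged via the stack formalism rather than a direct Jacobian analysis.
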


There are two remarks to make here. In dimension $1$ a smooth elliptic curve (realized either a cubic plane curve or an intersection of two quadrics in $\mathbb{P}^3$) has infinite automorphism group hence the omission of dimension $1$ from Benoist's theorem. Secondly, as was noted in the introduction, the smooth manifolds underlying complete intersections $X_{2}(2,2)$ and $X_{2}(3)$ do admit algebraic structures with infinite automorphism groups, however these algebraic structure cannot be isomorphic to a complete intersection. To see this, note by the adjuction formula such complete intersections are Del Pezzo, of degree $3$ and $4$ respectively, the finiteness of the automorphism groups of such surfaces was known classically.

\section{Proof of Theorem \ref{compin}} \label{localization}

We now proceed to the proof of our main result Theorem \ref{main}. Before giving the details of the proof, we describe the general approach. 
 Firstly we prove Proposition \ref{general} which is a localization formula for a certain topological invariant $I_{JR}$ which is defined below. The idea of Proposition \ref{general} proposition was motivated by a theorem of Jones and Rawnsley  \cite[Theorem 1.1]{JR},\footnote{ \label{jrprojective}
We note that \cite[Theorem 1]{JR} was known much earlier for smooth complex projective varieties having an algebraic torus action with only isolated fixed points. It may be proved by combining the Hodge index theorem, and the fact due to Białynicki-Birula that the Hodge numbers such a variety are pure i.e. $h^{ij}=0$ for $i \neq j$ \cite{BB}. In this sense \cite[Theorem 1]{JR} may be considered as a symplectic version of the Hodge index theorem. 
} which states that if a closed symplectic $(M,\omega)$ has a Hamiltonian $S^1$-action with isolated fixed points then it satisfies the JR equation.
 It is necessary to make the following definition. 

\begin{definition}
Suppose that $X$ is a closed manifold. The \textbf{JR equation} is $$I_{JR}(X) := \sigma(X) - \sum_{i \in \mathbb{Z}} (b_{4i}(X) - b_{4i+2}(X) ) = 0.$$ 
\end{definition}

In Proposition \ref{general} below, it is shown that the invariant $I_{JR}$ localizes in a simple way over the fixed point set. 
\begin{proposition}\label{general}
Suppose that $(M,\omega)$ is a closed symplectic manifold admitting a Hamiltonian $S^1$-action. Then, \begin{equation} \label{equa} I_{JR}(M) = \sum_{\{ F \subset M^{S^1}| \dim(F)=4k>0 \}} (-1)^{\lambda_{F}} I_{JR}(F) . \end{equation}
\end{proposition}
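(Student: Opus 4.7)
The plan is to derive Equation (\ref{equa}) by directly substituting the two localization formulas into the definition of $I_{JR}(M)$ and then collecting terms fixed component by fixed component. Writing
\[
I_{JR}(M) = \sigma(M) - \sum_{i \in \mathbb{Z}} (b_{4i}(M) - b_{4i+2}(M)),
\]
Theorem \ref{locsig} handles the signature term immediately. For the Betti number term, Theorem \ref{locbet} gives $b_{4i}(M) = \sum_F b_{4i-2\lambda_F}(F)$ and similarly for $b_{4i+2}(M)$. Interchanging the order of summation, the contribution of a fixed component $F$ is the alternating sum $S_F := \sum_i (b_{4i - 2\lambda_F}(F) - b_{4i+2-2\lambda_F}(F))$.

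The second step is a simple index shift. If $\lambda_F$ is even, then $2\lambda_F \equiv 0 \pmod 4$ and $S_F = \sum_k (b_{4k}(F) - b_{4k+2}(F))$. If $\lambda_F$ is odd, then $2\lambda_F \equiv 2 \pmod 4$, so each $b_{4i-2\lambda_F}(F)$ is a Betti number in degree $\equiv 2 \pmod 4$ and vice versa; the sum telescopes into $-\sum_k(b_{4k}(F) - b_{4k+2}(F))$. In either case $S_F = (-1)^{\lambda_F} \sum_k (b_{4k}(F) - b_{4k+2}(F))$. Combining with Theorem \ref{locsig} one obtains
\[
I_{JR}(M) = \sum_{F \subset M^{S^1}} (-1)^{\lambda_F}\Bigl(\sigma(F) - \sum_{k \in \mathbb{Z}} (b_{4k}(F) - b_{4k+2}(F))\Bigr) = \sum_{F \subset M^{S^1}} (-1)^{\lambda_F} I_{JR}(F).
\]

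The final step is to discard the fixed components whose dimension is not a positive multiple of $4$. When $\dim(F) = 0$, $F$ is a point and $I_{JR}(F) = 1 - 1 = 0$ directly. When $\dim(F) = 4j+2$, $\sigma(F) = 0$ by our convention, and the remaining alternating sum $\sum_k (b_{4k}(F) - b_{4k+2}(F))$ also vanishes: Poincaré duality on the closed oriented manifold $F$ gives $b_i(F) = b_{4j+2-i}(F)$, and since $4j+2 \equiv 2 \pmod 4$ this identification pairs each Betti number of degree $\equiv 0 \pmod 4$ with one of degree $\equiv 2 \pmod 4$, so the two partial sums cancel. Hence $I_{JR}(F) = 0$ for all such $F$, and only components with $\dim(F) = 4k > 0$ contribute to the right-hand side, yielding (\ref{equa}).

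There is no serious obstacle here; the argument is essentially bookkeeping. The only point that requires a moment of care is the parity bookkeeping in the shift of summation index (different cases depending on whether $\lambda_F$ is even or odd), together with the observation that Poincaré duality in dimension $\equiv 2 \pmod 4$ exchanges the two parts of the alternating sum.
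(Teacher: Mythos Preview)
Your proof is correct and follows essentially the same route as the paper's own argument: apply Theorems \ref{locbet} and \ref{locsig}, perform the index shift to extract the sign $(-1)^{\lambda_F}$, and then eliminate the contributions from isolated fixed points and from components of dimension $\equiv 2 \pmod 4$ using Poincar\'e duality. Your treatment of the index shift (splitting into $\lambda_F$ even versus odd) is slightly more explicit than the paper's, but the substance is identical.
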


\begin{proof}

By Theorems \ref{locbet}  and \ref{locsig} the alternating sum of even Betti numbers and the signature may be be expressed as sums over the fixed points set, in a similar way to the proof of \cite[Proposition 3.1]{L}. By Theorem \ref{locbet}:

$$ \sum_{j \in \mathbb{Z}}b_{4j}(M) - \sum_{j \in \mathbb{Z}} b_{4j +2}(M)  =  \sum_{F \subset M^{S^1}}( \sum_{j \in \mathbb{Z}} b_{4j -2\lambda_{F}}(F) - \sum_{j \in \mathbb{Z}} b_{4j +2 - 2\lambda_{F}}(F)).$$

For each component $F$, by shifting the indices in the above alternating sum, we obtain that:

$$(\sum_{j \in \mathbb{Z}} b_{4j - 2\lambda_{F}}(F) - \sum_{j \in \mathbb{Z}} b_{4j +2 - 2\lambda_{F}}(F) )  = (-1)^{\lambda_F} ( \sum_{j \in \mathbb{Z}} b_{4j }(F) - \sum_{j \in \mathbb{Z}} b_{4j +2 }(F) ). $$

By Theorem \ref{locsig} it holds that: $$ \sigma(M) =  \sum_{F \subset M^{S^1}} (-1)^{\lambda_{F}} \sigma(F)  . $$

Hence,

$$I_{JR}(M) = \sum_{F \subset M^{S^1}} (-1)^{\lambda_{F}} I_{JR}(F) . $$ 

It remains to show that if $F$ is an isolated fixed point or $\dim(F) = 2 \mod 4$, then $I_{JR}(F)=0$. For $F$ an isolated fixed point, the statement is straightforward to prove, recalling that a point has signature $1$.

Suppose $N$ has dimension $2 \mod 4$, by definition $\sigma(N)=0$. Hence, it suffices to prove $I_{JR}(N)=0$ it is sufficient to prove $$\sum_{i \in \mathbb{Z}}( b_{4i}(N) - b_{4i+2}(N) ) = 0. $$ This follows from Poincar\`{e} duality and the assumption that $\dim(N)= 2 \mod 4$.\end{proof}

\begin{remark}
A form of Proposition \ref{general} with additional conditions has been proven by Farber \cite[Corollary 6]{F}. Farber's result states that for a manifold with a Hamiltonian $S^1$-action such that $
I_{JR}(F) = 0$, $ \forall F \subset M^{S^1}$ and all the odd Betti numbers of each $F$ vanish, it holds that $I_{JR}(M) = 0$. Proposition \ref{general} improves on  \cite[Corollary 6]{F} in three ways. Firstly, it removes the assumption on the odd Betti numbers. Secondly, it shows that only fixed submanifolds of positive dimension divisible by $4$ are relevant. Lastly, the equation of Proposition \ref{general} can be useful for an inequality analysis when both sides are not necessarily zero, this is key for the proof of Theorem \ref{eightunimodal}.
\end{remark}

Given the generality of Proposition \ref{general}, it is natural ask whether $I_{JR}(M)=0$ for all closed symplectic manifolds admitting a Hamiltonian $S^1$-action. The following example shows that this is not the case.

\begin{example} Consider the standard Hamiltonian $S^1$-action on $\mathbb{CP}^4$ with the Fubini-Study form,  given by $$\lambda \cdot [z_0:\ldots:z_{4}] =[\lambda z_0:z_1:\ldots:z_{4}]  .$$ Then, $(\mathbb{CP}^4)^{S^1}$ has two components $[1:0:\ldots:0]$ and the hyperplane $H$ defined by the equation $z_{0}=0$. Let $S \subset H$ be a smooth degree $4$ hypersurface. Recall that $S$ is a K3 surface with signature $16$ and $b_{2}(S)=22$. Let $M$ be the $S^1$-equivariant blow-up of $\mathbb{CP}^4$ along $S$. Then $M^{S^1} $ has three components diffeomorphic to $H$, $S$ and point respectively, moreover the fixed point component diffeomorphic to $S$ satisfies $\lambda_{S}=1$. By Proposition \ref{general} $$I_{JR}(M) = -I_{JR}(S) = -36. $$
\end{example}

The following result simplifies $I_{JR}$ for $4$-dimensional components.

\begin{lemma} \label{dlemma}
Suppose that $N$ is a closed manifold of dimension $4$. Then  $$ I_{JR}(N) = 2b_{2}^{+}(N)-2.$$
\end{lemma}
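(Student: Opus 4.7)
The plan is a direct calculation unpacking the definition of $I_{JR}$ in dimension $4$.  Since a fixed component $N$ is connected and orientable (it is symplectic), Poincar\'{e} duality gives $b_{0}(N) = b_{4}(N) = 1$, so the alternating sum of even Betti numbers collapses to
\begin{equation*}
\sum_{i \in \mathbb{Z}} \bigl(b_{4i}(N) - b_{4i+2}(N)\bigr) = b_{0}(N) + b_{4}(N) - b_{2}(N) = 2 - b_{2}(N).
\end{equation*}

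Next I decompose the signature and middle Betti number according to the intersection form:  by definition $\sigma(N) = b_{2}^{+}(N) - b_{2}^{-}(N)$ and $b_{2}(N) = b_{2}^{+}(N) + b_{2}^{-}(N)$.  Substituting both identities into the definition of $I_{JR}$ yields
\begin{equation*}
I_{JR}(N) = \bigl(b_{2}^{+}(N) - b_{2}^{-}(N)\bigr) - \bigl(2 - b_{2}^{+}(N) - b_{2}^{-}(N)\bigr) = 2 b_{2}^{+}(N) - 2,
\end{equation*}
as desired.  There is no genuine obstacle here; the lemma is a bookkeeping identity whose role is to translate the hypothesis $I_{JR}(F) = 0$ on $4$-dimensional fixed components (which will be verified elsewhere in the argument) into the geometric statement that the intersection form of $F$ has $b_{2}^{+}(F) = 1$, making it easy to combine with results such as Corollary \ref{oocor}.
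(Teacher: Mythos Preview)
Your proof is correct and essentially identical to the paper's: both compute the alternating Betti sum as $2 - b_{2}(N)$ using $b_{0}=b_{4}=1$, then substitute $\sigma(N)=b_{2}^{+}-b_{2}^{-}$ and $b_{2}=b_{2}^{+}+b_{2}^{-}$ to arrive at $2b_{2}^{+}(N)-2$. The only cosmetic difference is that you justify $b_{0}=b_{4}=1$ by appealing to the symplectic/fixed-component context, whereas the lemma as stated applies to any closed (implicitly connected, oriented) $4$-manifold; the paper simply asserts these Betti numbers without comment.
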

\begin{proof}

Recall that $ b_{2}(N)  = b_{2}^{+}(N) + b_{2}^{-}(N)$, $ \sigma(N) = b_{2}^{+}(N) - b_{2}^{-}(N)$ and  that $b_{0}(N) = b_{4}(N)=1$.
So it follows that 
$$I_{JR}(N) =  (b_{2}^{+}(N) - b_{2}^{-}(N)) - 2 + (b_{2}^{+}(N) + b_{2}^{-}(N)) =  2b_{2}^{+}(N)-2,$$ proving the lemma. \end{proof}

\subsection{Proving the JR equation for $4$-dimensional fixed components} \label{proving}

In the theorem below, we complete the main step to classifying $8$-dimensional complete intersections with a symplectic $S^1$-action. The most important result which motivates Theorem \ref{main} is a result of Tolman \cite{T}, which classified Hamiltonian $S^1$-actions on $6$-manifolds with $b_{2}=1$. In \cite{GLS}, building on results of \cite{GVS}, substantial progress towards the problem of classifying $8$-dimensional symplectic manifolds having a Hamiltonian torus action with isolated fixed points and $b_{2}=1$ was completed. A sharp classification result into four explicit cases was obtained when the torus is two dimensional and $b_{4}=2$ or, equivalently, when the action has exactly $6$ fixed points \cite[Main Theorem]{GLS}. All of the examples have various subgroup $S^1$-actions with various fixed components of positive dimension.

\begin{theorem} \label{main}
Suppose that $(M,\omega)$ is a closed symplectic $8$-manifold admitting a Hamiltonian $S^1$-action, and that $b_{2}(M)=1$. Then the intersection form of $M$ is positive definite.
\end{theorem}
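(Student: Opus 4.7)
My strategy is to convert the statement into the vanishing of $I_{JR}(M)$ and reduce via Proposition \ref{general} to an assertion about each $4$-dimensional fixed component. With $\dim M = 8$ and $b_{2}(M)=1$, Poincar\'{e} duality gives $b_{0}=b_{8}=1$ and $b_{6}=b_{2}=1$, so expanding yields
\[
I_{JR}(M) \;=\; \sigma(M) - (b_{0}-b_{2}+b_{4}-b_{6}+b_{8}) \;=\; (b_{4}^{+}(M)-b_{4}^{-}(M)) - b_{4}(M) \;=\; -2\, b_{4}^{-}(M).
\]
Thus $I_{JR}(M)=0$ is equivalent to the intersection form being positive definite. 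By Proposition \ref{general} this reduces to showing $I_{JR}(F) = 0$ for every $4$-dimensional fixed component $F$, which by Lemma \ref{dlemma} is the statement $b_{2}^{+}(F)=1$. Since $F$ is a closed symplectic $4$-manifold, the class $[\omega|_{F}]$ has positive self-intersection and hence $b_{2}^{+}(F) \geq 1$ automatically; the work is to rule out $b_{2}^{-}(F) \geq 1$.

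I would then split into cases based on whether $F$ is extremal or contained in an isotropy $6$-manifold. If $F = M_{\min}$, then $\lambda_{F}=0$ and in the formula $b_{2}(M) = \sum_{F'} b_{2-2\lambda_{F'}}(F')$ of Theorem \ref{locbet} the term for $F$ contributes $b_{2}(F)$; non-negativity of the other summands together with $b_{2}(M)=1$ gives $b_{2}(F) \leq 1$, whence $b_{2}^{-}(F)=0$. The case $F=M_{\max}$ is symmetric (apply the argument to $-H$, equivalently to $b_{6}(M) = b_{2}(M) = 1$ via Poincar\'{e} duality). If instead $F$ is internal and contained in an isotropy $6$-manifold, assume the $S^{1}$-action is effective so that $\gcd(w_{+},|w_{-}|) = 1$; the isotropy hypothesis forces $\max(w_{+},|w_{-}|) > 1$, and WLOG $w_{+} > 1$. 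The connected component through $F$ of $M^{\mathbb{Z}/w_{+}}$ is then a closed symplectic $6$-manifold $N$ with Hamiltonian $S^{1}$-action in which $F$ has only the single positive normal weight $w_{+}$, so $F = N_{\min}$. Normalizing $c_{1}(M) = [\omega]$ via Lemma \ref{symfano} and applying Theorem \ref{dhthe} to $N$ just above the critical value $H(F)$, I would use the reduced space $N_{c}$ (an orbifold whose underlying smooth $4$-manifold is $F$) to produce a symplectic form on $F$ whose class is a positive multiple of $c_{1}(F)$; Theorem \ref{ooth} and Corollary \ref{oocor} then yield $b_{2}^{+}(F) = 1$.

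The main obstacle is the remaining case: $F$ internal with normal weights $(+1,-1)$. After normalizing $c_{1}(M)=[\omega]$, Theorem \ref{wsf} gives $H(F) = 0$. The local model for $M$ near $F$ is the total space of a direct sum $L_{+}\oplus L_{-}$ of weight $\pm 1$ line bundles over $F$, so for small $c > 0$ (resp. $c < 0$) the reduced space $M_{c}$ contains $F$ as the zero section of $L_{-}$ (resp. $L_{+}$), a codimension-two symplectic submanifold of the $6$-orbifold $M_{c}$. The plan is to use Theorem \ref{dhthe} together with a wall-crossing analysis at $c = 0$ to construct a closed symplectic $4$-manifold diffeomorphic to $F$ whose symplectic class is a positive multiple of $c_{1}(F)$; Theorem \ref{ooth} then identifies it with a del Pezzo surface and Corollary \ref{oocor} concludes $b_{2}^{+}(F) = 1$, completing the proof. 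The difficulty lies precisely in executing the Duistermaat-Heckman computation across a critical level supporting a positive-dimensional fixed component and extracting a clean proportionality with $c_{1}(F)$.
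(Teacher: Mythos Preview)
Your high-level strategy matches the paper's, and the extremal case is handled correctly. However, two of your three non-extremal arguments have genuine gaps.

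\textbf{The isotropy $6$-manifold case does not work as stated.} Applying Duistermaat--Heckman to the $6$-manifold $N$ near its minimum $F$ yields reduced forms with $[\omega_c]\to[\omega|_F]$ and linear variation in the direction $-\tfrac{1}{w_+}c_1(L_+)$; nothing in that data produces $c_1(F)$, so there is no route to Ohta--Ono here. The paper's argument is entirely different and elementary: it first establishes (using Theorem~\ref{locbet} with $b_2(M)=b_6(M)=1$ and Lemma~\ref{DH Lemma}) that once $\lambda_N=1$ the fixed set consists of exactly three components---isolated $M_{\min}$, $N$, isolated $M_{\max}$. Then the isotropy $6$-manifold $Q$ has precisely two fixed components, say $M_{\min}$ and $N=Q_{\max}$; applying Theorem~\ref{locbet} to $Q$ gives $b_2(Q)=b_0(N)=1$, Poincar\'e duality gives $b_4(Q)=1$, and Theorem~\ref{locbet} again gives $b_2(N)=b_4(Q)=1$, whence $b_2^+(N)=1$ trivially.

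\textbf{The three-component reduction is also essential for the $(\pm1)$-weight case,} and your proposal omits it. Knowing there are no critical values strictly between $H_{\min}$ and $H(N)=0$ is what lets the paper integrate Duistermaat--Heckman globally: Lemma~\ref{limzero} (which needs $M_{\min}$ isolated) together with Lemma~\ref{realblowup} gives $[\omega]|_N=|H_{\min}|\,c_1(L_-)$ and symmetrically $[\omega]|_N=|H_{\max}|\,c_1(L_+)$. Substituting into $[\omega]|_N=c_1(M)|_N=c_1(N)+c_1(L_-)+c_1(L_+)$ yields
\[
c_1(N)=\Bigl(1-\tfrac{1}{|H_{\min}|}-\tfrac{1}{|H_{\max}|}\Bigr)[\omega]|_N,
\]
and Theorem~\ref{wsf} forces $|H_{\min}|,|H_{\max}|\geq 4$ so the coefficient is positive; only then does Corollary~\ref{oocor} apply. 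Your wall-crossing sketch does not supply this chain of identities, and without the structural reduction there could be intervening critical levels obstructing the global integration. (A minor slip: what you must rule out is $b_2^+(F)\geq 2$, not $b_2^-(F)\geq 1$; e.g.\ $\mathbb{CP}^2\#\overline{\mathbb{CP}^2}$ has $b_2^-=1$ but $I_{JR}=0$.)
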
 

\begin{proof}
Firstly we note that the conclusion of the theorem is equivalent to showing that $I_{JR}(M)=0$ (see Theorem \ref{lw}). Moreover, by Theorem \ref{general} and Lemma \ref{dlemma} if $b_{2}^{+}(N)=1$ for each $4$-dimensional fixed point component $N$, then the result holds.

Let $N$ be a $4$-dimensional fixed component, by a dimension count it follows that $0 \leq \lambda_N \leq 2$. Suppose that $\lambda_N \in  \{0,2 \}$, then by Theorem \ref{locbet} $b_{2}(N)=1$. Since $N$ is a closed symplectic submanifold $b_{2}(N)>0$.   In the localization sum for the Betti numbers Theorem \ref{locbet}, $b_{2}(N)$ either contributes to $b_{2}(M)$ or $b_{6}(M)$ (when $\lambda_{N}=0$ or $2$ respectively). By hypothesis and Poincar\'e duality we have that $b_{2}(M) = b_{6}(M) = 1$. It follows that $b_{2}(N)=1$ in each case. Finally, since $b_{2}(N)=1$ and $N$ is a closed symplectic submanifold $\int_{N} [\omega_N]^2 >0$ implying that $b_{2}^{+}(N)=1$.

Hence we may assume that $\lambda_{N} = 1$, this case is more involved. Firstly this assumption implies by Theorem \ref{locbet} that $M_{\min}$ and $M_{\max}$ are isolated fixed points. 

By Theorem \ref{locbet} every other fixed component is an isolated fixed point $p$ with $\lambda_p = 2$. By Lemma \ref{DH Lemma} that such fixed points do not occur. More precisely if such fixed points satisfy $H(p) \leq H(N)$ then they are excluded by the first part of Lemma \ref{DH Lemma} and if $H(p) \geq H(N)$ they are excluded by the second part.

Hence, there are three fixed components $M_{\min}$, $M_{\max}$ and $N$. By Lemma \ref{symfano} we may assume that $c_{1}(M) = [\omega]$. Since $\dim(N)=4$ and $\lambda_{N} = 1$ the normal bundle of $N$ splits as a sum of two line bundles, one upward flowing with respect to the Hamiltonian, and one downward flowing. We denote the splitting by $L_{1} \oplus L_{2}$. Let $w_{1}$ and $w_2$ denote the weights of the $S^1$-action along $L_1$ and $L_2$ respectively.

If either $w_{1}$ or $w_{2}$ has modulus greater than $1$, then there is an isotropy $6$-manifold $Q$ such that $Q^{S^{1}}$ has two fixed components. Suppose without loss of generality that $w_{1}<-1$, so that $Q_{\max}=N$, $Q_{\min}=M_{\min}$. Then applying Theorem \ref{locbet} to the 	Hamiltonian $S^1$-action on $Q$ gives $b_{2}(Q)=b_{0}(N) =1$. Then, by Poincar\'{e} duality $b_{4}(Q)=1$, then applying Theorem \ref{locbet} again $b_{4}(Q) = b_{2}(N)=1$. Then since the symplectic form of $N$ squares positively we have that $b_{2}^{+}(N)=1$ as required. 

Hence, we may assume that $w_{1} = -1$ and $w_{2} = 1$.   By Theorem \ref{wsf}, $H(N)=0$ To ease notation we set $H(M_{\max}) = H_{\max}$ and $H_{\min} = H(M_{\min}).$ Then since $M_{\min}$ (respectively $M_{\max}$) is an isolated fixed point, all of the four weights along it are non-zero and positive (respectively negative). This implies by Theorem \ref{wsf} that $|H_{\min}| =   - H_{\min} \geq  4$ and $|H_{\max}| = H_{\max} \geq 4$. 

Considering how the tangent bundle of $M$ splits over $N$ shows that \begin{equation} \label{chernclass} [\omega]|_{N} = c_{1}(M)|_{N} = c_{1}(N) + c_{1}(L_{1}) + c_{1}(L_{2}). \end{equation}

Next, we use Lemma \ref{realblowup} to analyse the behavior of the symplectic form around $N$. Let $M_{-}$ denote the symplectic manifold provided by Lemma \ref{realblowup}.a). Then since $H(N)=0$ and $\mu_{-}$ is an equivariant symplectomorphism below level $0$ by applying Lemma \ref{limzero}. $$[\omega_{-}]|_{F_{-}} = |H_{\min}| c_{1} (H_{-}^{-1}(0)) _{F_{-}}.$$ Therefore, by Lemma \ref{realblowup}.c):

$$[\omega]|_{N} = |H_{\min}| c_{1}(L_{1}) .$$

Applying the same idea to $M_{+},\mu_{+}$ etc. give by Lemma \ref{realblowup} gives the equation:

$$[\omega]|_{N} =  |H_{\max}| c_{1}(L_{2}).$$

Substituting these equations back into equation (\ref{chernclass}) gives

$$[\omega]|_{N} = c_{1}(M)|_{N} = c_{1}(N) + (\frac{1}{|H_{\max}|} + \frac{1}{|H_{\min}|} ) [\omega]|_{N},$$
Using the fact that $|H_{\min}| \geq 4$ and $|H_{\min}| \geq 4$ implies that $$1- (\frac{1}{|H_{\max}|} + \frac{1}{|H_{\min}|}) >0 . $$ 
By, Corollary \ref{oocor} $b_{2}^{+}(N)=1$, proving the theorem.\end{proof}

The following simple example may help the reader to get an overview on the last part of the proof.

\begin{example}
Consider $\mathbb{CP}^4$, with the Fubini-Study symplectic form, rescaled so that $c_1 = [\omega]$. Consider the Hamiltonian $S^1$-action, defined for $\lambda \in S^1$ by: $$\lambda \cdot[x_{0} : x_{1} :x_{2} :x_{3} :x_{4}] =  [x_{0} : \lambda x_{1} : \lambda x_{2} : \lambda x_{3} :\lambda^2 x_{4}]. $$

Let $X_{h} \in H^{2}(\mathbb{CP}^4,\mathbb{Z})$ denote the hyperplane class i.e. the Poincar\'{e} dual of a hyperplane \footnote{We deviate from the usual notation $H$ because $H$ denotes the Hamiltonian.}. For the above action, $M_{\min}$ and $M_{\max}$ are isolated and are mapped to $-5$ and $5$ respectively by the moment map of Theorem \ref{wsf}. The only other fixed component is $N = \mathbb{CP}^2$ with $H(N)=0$ and normal bundle $\mathcal{O}(1) \oplus \mathcal{O}(1)$. Therefore $c_{1}(M)|N = c_{1}(T\mathbb{CP}^2) + 2 c_{1}(\mathcal{O}(1)) = 5X_{h}$, showing that  $c_{1}(M)|N = |H_{\max}|c_{1}(\mathcal{O}(1)) = |H_{\min}|c_{1}(\mathcal{O}(1)) $.

\end{example}

Next, we show that Theorem \ref{compin} follows from Theorem \ref{main}.

\begin{proof}[Proof of Theorem \ref{compin}]
Suppose that $(M,\omega)$ is an $8$-dimensional complete intersection with a symplectic $S^1$-action. Because $M$ is simply connected the action is Hamiltonian. By Lemma \ref{betti} $b_{2}(M)=1$. By Theorem \ref{main} the intersection form of $M$ is positive definite. Therefore by Theorem \ref{lw} $M$ is diffeomorphic to either $X_{4}(1), X_{4}(2)$ or $X_{4}(2,2)$.\end{proof}

We also present the following corollary which may be of independent interest.

\begin{corollary} \label{corctwo}
Suppose that $M$ is an $8$-dimensional closed symplectic manifold with a Hamiltonian $S^1$-action and with $b_{2}(M)=1$. Then, 

$$\int_{M} c_{2}(M)^2 \geq 0.$$
\end{corollary}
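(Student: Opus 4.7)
The plan is to deduce the corollary directly from Theorem \ref{main}, which is the main technical result already established. The observation is simply that the statement $\int_M c_2(M)^2 \geq 0$ is the assertion that $c_2(M) \in H^4(M;\mathbb{R})$ has non-negative self-intersection, and this follows immediately once one knows that the intersection form on $H^4(M;\mathbb{R})$ is positive definite.

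More precisely, I would argue as follows. Since $M$ is a closed symplectic $8$-manifold admitting a Hamiltonian $S^1$-action with $b_2(M) = 1$, Theorem \ref{main} applies and tells us that the intersection form
\[
Q \colon H^4(M;\mathbb{R}) \times H^4(M;\mathbb{R}) \to \mathbb{R}, \qquad Q(\alpha,\beta) = \int_M \alpha \cup \beta,
\]
is positive definite. The second Chern class $c_2(M)$ is a real cohomology class of degree $4$, hence it lies in $H^4(M;\mathbb{R})$. Evaluating $Q$ on the diagonal gives $Q(c_2(M), c_2(M)) = \int_M c_2(M)^2 \geq 0$, which is exactly the desired inequality (with equality precisely when $c_2(M)$ vanishes in real cohomology).

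There is no genuine obstacle beyond Theorem \ref{main} itself; once the positive definiteness of the intersection form is in hand, the corollary is a formal consequence. It might be worth remarking that the same argument shows $\int_M \alpha^2 \geq 0$ for \emph{any} real degree-$4$ characteristic class (for instance the Pontryagin class $p_1(M)$, or linear combinations thereof), so the corollary is really just a convenient sample instance of a general positivity phenomenon forced by Theorem \ref{main}.
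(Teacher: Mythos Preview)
Your argument is correct and is exactly the intended one: the paper leaves Corollary~\ref{corctwo} without a written proof precisely because it is an immediate consequence of the positive definiteness of the intersection form established in Theorem~\ref{main}, applied to the class $c_{2}(M)\in H^{4}(M;\mathbb{R})$.
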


\section{Results in arbitrary dimensions} \label{arbitrary}
\subsection{Symplectic $S^1$-actions} \label{symplectic}

In the following theorem, we give a classification of the multidegrees of complete intersections admitting a Hamiltonian $S^1$-action whose fixed point sets satisfy an assumption analogous  to that of \cite[Theorem 1.1]{L}.

\begin{theorem} \label{symplcirc}
Suppose that $(M,\omega)$ is a complete intersection admitting a symplectic $S^1$-action with no fixed components $F$ satisfying $\dim(F)>0$ and $\dim(M) = \dim(F) \mod 4$.  Then $M$ is diffeomorphic to one of $X_{m}(1), X_{m}(2), X_{2m}(2,2),$  for some $m \in \mathbb{N}$ or $X_{1}(3)$, $X_{2}(3)$. 
\end{theorem}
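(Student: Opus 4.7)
The plan is to split into cases by the parity of the complex dimension $n$ of $M$. The case $n=1$ is handled by the discussion in the introduction: a complete intersection curve admitting a nontrivial circle action must be diffeomorphic to $S^2$ or $T^2$, which are represented in the conclusion by $X_{1}(1) \cong X_{1}(2)$ and $X_{1}(3) \cong X_{1}(2,2)$ respectively. For $n \geq 2$ the complete intersection $M$ is simply connected, so every symplectic $S^1$-action is automatically Hamiltonian and the results of Section \ref{prel} apply.

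When $n$ is even we have $\dim(M) \equiv 0 \pmod 4$, and the hypothesis on the fixed point set excludes precisely those positive-dimensional fixed components of dimension divisible by $4$, which are the only terms contributing to the right-hand side of equation (\ref{equa}) of Proposition \ref{general}. Hence $I_{JR}(M) = 0$. Combining this with the Betti number constraint of Lemma \ref{betti}, Theorem \ref{lw} places $M$ in the list $X_{n}(1), X_{n}(2), X_{n}(2,2), X_{2}(3)$, all of which appear in the conclusion.

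When $n$ is odd we have $\dim(M) \equiv 2 \pmod 4$, so the hypothesis forces every positive-dimensional fixed component $F$ to satisfy $\dim(F) \equiv 0 \pmod 4$. The aim is to show $b_{n}(M) = 0$, for then Lemma \ref{betti} gives $\chi(M) = n+1$ and Theorem \ref{betichi} identifies $M$ as $X_{n}(1)$ or $X_{n}(2)$. By Theorem \ref{locbet}, $b_{n}(M) = \sum_{F} b_{n - 2\lambda_{F}}(F)$. Isolated fixed points contribute zero since $n - 2\lambda_{F}$ is odd and hence nonzero. For a positive-dimensional $F$, applying Theorem \ref{locbet} in every odd degree $i \neq n$ and using $b_{i}(M)=0$ from Lemma \ref{betti}, each summand $b_{i-2\lambda_{F}}(F)$ vanishes; reindexing, every odd Betti number of $F$ vanishes with the sole possible exception of $b_{n - 2\lambda_{F}}(F)$.

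The crux of the argument, and the main obstacle, is to show that this remaining odd Betti number also vanishes. Poincar\'e duality on $F$ identifies $b_{n - 2\lambda_{F}}(F)$ with $b_{\dim(F) - n + 2\lambda_{F}}(F)$. Both indices are odd, and if they coincided we would have $n - 2\lambda_{F} = \dim(F)/2$, which is impossible since the left side is odd while $\dim(F)/2$ is even by the divisibility $\dim(F) \equiv 0 \pmod 4$. Hence the paired Betti number sits in a different odd degree and was already shown to vanish in the previous step, forcing $b_{n - 2\lambda_{F}}(F) = 0$. Summing over all fixed components then yields $b_{n}(M) = 0$ and completes the proof. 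The role of the dimension hypothesis is thus quite different in the two cases: in the even case it trivializes the right-hand side of the $I_{JR}$ localization, while in the odd case it delivers the parity needed to close the Poincar\'e duality argument on each fixed component.
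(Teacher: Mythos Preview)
Your proof is correct and follows essentially the same approach as the paper's: the split by parity of $n$, the use of Proposition \ref{general} together with Theorem \ref{lw} for $n$ even, and the Poincar\'e duality argument on each fixed component to kill $b_n(M)$ and invoke Theorem \ref{betichi} for $n$ odd, all match the paper. Your treatment of the odd case is in fact spelled out more explicitly than the paper's (which compresses the key step into the phrase ``every odd Betti number appears in at least two degrees''), but the underlying argument is identical.
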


\begin{proof}

If $\dim(M)=2$ by the classification of smooth $S^1$-actions on closed orientable surfaces the statement holds. Therefore assume $\dim(M)=2n> 2$, then since $\pi_{1}(M) = \{1\}$  by the Lefschetz hyperplane theorem, the action is Hamiltonian.

Firstly, suppose that $\dim(M)=2n$, with $n$ odd. By Lemma \ref{betti} all of the even Betti numbers of $M$ are equal to $1$ and all of the odd Betti numbers are $0$, except possibly $b_{n}(M)$. Let $F$ be a fixed submanifold. By Theorem \ref{locbet}, $b_{i}(F) \neq  0$ for at most one odd $i$. Since by assumption $\dim(F)$ is divisible by $4$, this implies by applying Poincar\'{e} duality to $F$ that all of the odd Betti numbers of $F$ vanish, because every odd Betti number appears in at least two degrees. Therefore by Theorem \ref{locbet} the odd Betti numbers of $M$ vanish. Since all of the even Betti numbers are equal to one, it follows that $\chi(M) = n+1$, hence the result follows by Theorem \ref{betichi}.

Next, suppose that $\dim(M)=2n$, with $n$ even. Then by assumption all terms on the right hand side of the equation given in Equation (\ref{equa}) of Proposition \ref{general} vanish. It follows that $$\sigma(M) = \sum_{k \in \mathbb{Z}} ( b_{4k}(M) - b_{4k+2}(M) ), $$
hence the result follows by Theorem \ref{lw}.\end{proof}

\subsection{GKM actions} \label{GKM}
In this section, we strengthen Theorem \ref{symplcirc} under the additional assumption that the action is GKM and preserves a symplectic form homotopic to the Fubini-Study form. We are able to give a complete characterization as in the classification of infinite automorphism groups due to Benoist (see Theorem \ref{benoist}).

\begin{theorem} \label{GKM}
Let $X$ be a smooth complete intersection of complex dimension greater than $2$, and let $\omega_{FS}$ be the restriction of the Fubini-Study form to $X$. Then $X$ admits a GKM torus action preserving a symplectic form $\omega$ that is homotopic to $\omega_{FS}$ $\iff$ $|Aut(X)|= \infty$.
\end{theorem}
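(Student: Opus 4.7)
The plan is to prove the two directions separately after first invoking Benoist's Theorem \ref{benoist}, which (for $n>2$) gives $|Aut(X)|=\infty$ if and only if $X\cong X_n(1)$ or $X_n(2)$. For the easy direction $(\Leftarrow)$, I would exhibit explicit Hamiltonian GKM actions preserving $\omega_{FS}$: on $X_n(1)=\mathbb{CP}^n$ the standard diagonal $T^n\subset U(n+1)$ acts with $n+1$ isolated fixed points and preserves $\omega_{FS}$; for $X_n(2)$, after bringing the quadric to a normal form such as $\sum z_i z_{n+2-i}=0$, the subtorus of the standard torus of $\mathbb{CP}^{n+1}$ preserving that equation produces a Hamiltonian GKM action again preserving $\omega_{FS}$.

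For the substantial direction $(\Rightarrow)$, suppose $(X,\omega)$ admits such a GKM torus action. Since $X$ is simply connected and a generic circle subgroup has the same (isolated) fixed point set as the full torus, the restricted $S^1$-action is Hamiltonian with isolated fixed points, and Corollary \ref{iso} reduces $X$ diffeomorphically to one of $X_m(1)$, $X_m(2)$, $X_{2m}(2,2)$ (for some $m$), $X_1(3)$, or $X_2(3)$. The hypothesis $n>2$ rules out $X_1(3)$, $X_2(3)$ and $X_2(2,2)$, leaving only the desired cases together with $X_{2m}(2,2)$ for $m\geq 2$; the rest of the proof is devoted to excluding these.

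To exclude them, set $n=2m\geq 4$ and assume $X\cong X_n(2,2)$. Since $b_2(X)=1$ by Lemma \ref{betti}, Lemma \ref{symfano} normalizes so that $c_1(X)=[\omega]$; combined with the homotopy hypothesis this pins $[\omega]$ to be $(n-1)$ times the hyperplane class $H|_X$. The plan is to compute $\int_X c_1\cup c_{n-1}$ in two incompatible ways. Algebraically, \cite[Proposition 3.4]{LW} combined with \cite[Corollary 3.1]{GS} produces a closed-form value $P(n)$, extracted from the Chern polynomial $(1+H)^{n+3}(1+2H)^{-2}$ and the degree $\int_X H^n=4$. Geometrically, Lemma \ref{torone} identifies the toric 1-skeleton with the Poincaré dual of $c_{n-1}$, so
\[
\int_X c_1\cup c_{n-1} \;=\; \sum_e \int_e \omega,
\]
the sum running over the $n\chi(X)/2$ invariant symplectic $2$-spheres of the GKM graph. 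Each sphere has positive integer $H$-degree in $\mathbb{CP}^{n+2}$, so $\int_e\omega \geq n-1$, yielding a lower bound
\[
P(n) \;\geq\; (n-1)\cdot\frac{n\chi(X)}{2}.
\]
The base case $n=4$ already fails this inequality ($P(4)=36$ while the right side is $72$), and I expect the gap to widen polynomially in $n$, delivering the contradiction for all $m\geq 2$.

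The main obstacle will be carrying out the two computations sharply enough to guarantee strictness of the inequality for every $m\geq 2$. On the algebraic side, extracting $P(n)$ cleanly from the Libgober-Wood and Goertsches-Sabatini formulas is a routine but careful Chern-class manipulation; on the geometric side, one needs \cite[Corollary 3.1]{GS} to translate the weights of the GKM action into integer edge contributions expressed through $c_1$, which is what ensures each $\int_e\omega\geq n-1$. Once both expressions are assembled, the verification reduces to a polynomial growth comparison anchored by the base case $n=4$.
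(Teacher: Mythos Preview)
Your approach is essentially identical to the paper's: reduce via Corollary \ref{iso} and Benoist to $X_{2m}(2,2)$, compute $\int_M c_1c_{n-1}$ using \cite[Corollary 3.1]{GS} together with the Betti numbers from \cite[Proposition 3.4]{LW}, and bound the same quantity below via Lemma \ref{torone} and the fact that $c_1$ is $(n-1)$ times the primitive generator of $H^2$. The only refinement the paper supplies is that the algebraic side simplifies exactly to $\tfrac{n}{2}(n+2)(n-1)$ while the toric $1$-skeleton lower bound is $n(n+2)(n-1)$, so the contradiction is an immediate factor of $2$ for all $n$ and no ``base case plus polynomial growth'' argument is needed; also note that \cite[Corollary 3.1]{GS} plays a role only on the algebraic side, not in extracting the edge lower bound.
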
 

\begin{proof}
By Corollary \ref{completeiso} and the theorem of Benoist (see Theorem \ref{benoist}), it is sufficient to exclude the existence of such an action on $X_{2m}(2,2)$ for some positive integer $m$, set $n=2m$.  

Suppose that there is a closed symplectic manifold $(M,\omega)$ diffeomorphic to $X_{n}(2,2)$ such that $\omega$ is homotopic to $\omega_{FS}$ and such that $(M,\omega)$ has a Hamiltonian, GKM torus action.  By Lemma \ref{betti} and  \cite[Proposition 3.4]{LW}:

\begin{equation} \label{bettitwoquad}
  b_{i}(M) =    \begin{cases}
   1, \;\; \text{for $i$ even, $0 \leq i \leq 2n$, and $i \neq n$.} \\
     n+4, \;\;\text{ for } i = n. \\
      0, \;\; \text{for $i$ odd.}  
    \end{cases}   
  \end{equation}

Applying \cite[Corollary 3.1]{GS} gives $$\int_{M} c_1c_{n-1}(M) = \sum_{p=0}^{n} b_{2p}(M)(6p(p-1) + \frac{5n-3n^2}{2}).$$ Inputting Equation (\ref{bettitwoquad}) and simplifying gives $$\int_{M} c_1c_{n-1}(M)  = \frac{n}{2}(n+2)(n-1).$$

Since by assumption $n>2 $ it follows by Lemma \ref{betti} that $b_{2}(M)=1$. Hence by Lemma \ref{symfano} it is possible to rescale the hypothetical invariant symplectic form so that $c_{1}=[\omega]$.Moreover, the topological Euler characteristic of $M$ is $$2(n+2).$$ It follows that the the number of edges of the GKM graph is $$n(n+2).$$
By Lemma \ref{torone}, the collection of spheres corresponding to the edges is Poincar\'{e} dual to $c_{n-1}$.  By the adjunction formula $c_{1}$ is $(n-1)$ times the primitive generator of $H^2(X_{n}(2,2)) \cong \mathbb{Z}$. Therefore, using the condition $c_{1} = [\omega]$  the integral of $c_{1}$ on each of the spheres is a positive integer multiple of $(n-1)$. Hence the Chern number $c_1c_{n-1}$ is bounded below by $n(n+2)(n-1)$ which is a contradiction. \end{proof}

We note finally that it is possible to obtain a statement covering complex dimension $2$, at the price of requiring that the symplectic form preserved by the GKM action is monotone. Since both $X_{2}(2,2)$ and $X_{2}(3)$ admit K\"{a}hler forms preserved by a toric action, this is the strongest possible statement which can cover complex dimension $2$. Since the proof is identical to the proof of Theorem \ref{GKM}, it is omitted.

\begin{theorem}
Let $X$ be a smooth complete intersection and let $\omega_{FS}$ be the restriction of the Fubini-Study form to $X$. Then $X$ admits a GKM torus action preserving a monotone symplectic form that is homotopic to $\omega_{FS}$ $\iff$ $|Aut(X)|= \infty$.
\end{theorem}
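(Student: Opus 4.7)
The plan is to mimic the proof of Theorem \ref{GKM} almost verbatim, replacing the appeal to Lemma \ref{symfano} (which relied on $b_2(M) = 1$, automatic in complex dimension greater than $2$ by Lemma \ref{betti}) with the monotonicity hypothesis: both allow the rescaling of $\omega$ so that $c_1(M) = [\omega]$.

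The backward direction is immediate from Benoist's Theorem \ref{benoist}: $\mathbb{CP}^n = X_n(1)$ and the smooth quadric $X_n(2)$ carry their standard Hamiltonian GKM torus actions preserving the monotone Fubini-Study form. For the forward direction, I first reduce the diffeomorphism type of $X$. Restricting a Hamiltonian GKM $T^k$-action to a generic subcircle gives a symplectic $S^1$-action whose fixed set agrees with $M^{T^k}$, and GKM forces all fixed components to have dimension at most $2$. In complex dimension $2$, where $\dim M = 4$, any $2$-dimensional fixed component $F$ satisfies $\dim F \equiv 2 \not\equiv 0 \equiv \dim M \pmod{4}$, so the hypothesis of Theorem \ref{completeiso} is met. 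Applying it leaves $\mathbb{CP}^2, \mathbb{CP}^1 \times \mathbb{CP}^1, X_2(2,2)$, and $X_2(3)$ as the possibilities; the first two already have infinite automorphism group, and $X_{2m}(2,2)$ for $m \geq 2$ is excluded by Theorem \ref{GKM}, so the new content is to rule out $X_2(2,2)$ and $X_2(3)$.

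For these two cases I would run the computation from Theorem \ref{GKM} with monotonicity substituting for $b_2 = 1$. Monotonicity plus Ono's theorem rescales $\omega$ so that $c_1(M) = [\omega]$. Then \cite[Corollary 3.1]{GS}, together with the Betti numbers supplied by Lemma \ref{betti} and \cite[Proposition 3.4]{LW}, yields $\int_M c_1 c_{n-1} = 4$ for $X_2(2,2)$ and $3$ for $X_2(3)$ by direct evaluation at $n = 2$. On the other hand, Lemma \ref{torone} realizes the toric $1$-skeleton as the Poincar\'{e} dual of $c_{n-1}$, and in complex dimension $2$ it consists of $n\chi(X)/2 = \chi(X)$ symplectic $2$-spheres ($8$ for $X_2(2,2)$ and $9$ for $X_2(3)$). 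Integrality of $c_1 = [\omega]$ and positivity on symplectic spheres bound $\int_M c_1 c_{n-1}$ below by the number of spheres, contradicting the computed values in each case.

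The main adjustment from the proof of Theorem \ref{GKM} will be the final inequality: there one used the sharper bound $\int_S c_1 \geq n - 1$ coming from $c_1$ being $(n-1)$ times the primitive generator of $H^2(X) \cong \mathbb{Z}$, which breaks down once $b_2 > 1$. In complex dimension $2$, however, the crude integrality bound $\int_S c_1 \geq 1$ from monotonicity already suffices, since the small Chern numbers $4$ and $3$ are comfortably below the sphere counts $8$ and $9$.
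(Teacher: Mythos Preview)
Your proposal is correct and follows exactly the approach the paper intends: the paper omits the proof entirely, saying only that it is ``identical to the proof of Theorem~\ref{GKM},'' and you have faithfully carried this out, with the monotonicity hypothesis standing in for Lemma~\ref{symfano} in complex dimension~$2$. Your explicit treatment of the two new cases $X_2(2,2)$ and $X_2(3)$---in particular the observation that the crude bound $\int_S c_1 \geq 1$ already suffices there---is precisely the adjustment the paper's ``identical'' glosses over.
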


\begin{remark} Another, perhaps more elegant but weaker, possibility is to require that the symplectic structure is symplectomorphic to one given by the Fubini-Study symplectic form. This was stated in Theorem \ref{fs} of the introduction. \end{remark}

\section{Unimodality of Betti numbers} \label{unibetti}

In this section, we will present some applications regarding the unimodality of Betti numbers of manifolds admitting a Hamiltonian $S^1$-action. Tolman has asked whether for a closed symplectic manifold with a Hamiltonian $S^1$-action, the even  Betti numbers are unimodal (i.e. weakly increasing up to the middle degree) \cite[Problem 4.3]{JHKLM}. For K\"{a}hler manifolds the property follows from the hard Lefschetz theorem.

In \cite{CK} Cho and Kim proved that  $8$-dimensional closed symplectic manifolds having a Hamiltonian $S^1$-action with isolated fixed points, the even Betti numbers are unimodal. Here we prove the same result under the weaker assumption that neither of the extremal submanifolds is $4$-dimensional.

\begin{proof}[Proof of Theorem \ref{eightunimodal}]
By Proposition \ref{general},

$$I_{JR}(M) = \sum_{\{ F \subset M^{S^1}| \dim(F)=4k>0 \}} (-1)^{\lambda_{F}} I_{JR}(F) . $$

Since by assumption $M_{\min}$ and $M_{\max}$ are not $4$-dimensional, every $4$-dimensional fixed component $F$ satisfies $\lambda_{F}=1$.

$$I_{JR}(M) = \sum_{\{ F \subset M^{S^1}| \dim(F)=4 \}} - I_{JR}(F) . $$

By Lemma \ref{dlemma} and since $F$ is a closed symplectic submanifold:
 $$I_{JR}(F) = 2b^{+}(F)-2 \geq 0,$$ hence by Poincar\'{e} duality

$$0 \geq I_{JR}(M) = \sigma(M) - 2 +2b_{2}(M) -b_{4}(M) . $$

Since $M$ is a closed symplectic manifold $ \sigma(M) >  -b_{4}(M)$. Hence 
$$b_{2}(M)  < b_{4}(M) +1, $$ and so the result follows. \end{proof}

Motivated by Theorem \ref{eightunimodal}, we state a conjecture. Following \cite{LR} a closed symplectic manifold $(M,\omega)$ is called symplectically rational if there is a connected symplectic manifold with a semi-free Hamiltonian $S^1$-action such that the reduced spaces at two regular levels are symplectomorphic to $(M,\omega)$ and $(\mathbb{CP}^n,\omega_{FS})$ respectively.

\begin{conjecture}
If $(M,\omega)$ is a closed symplectically rational $8$-manifold admitting a Hamiltonian $S^1$-action. Then, the even Betti numbers of $M$ are unimodal.
\end{conjecture}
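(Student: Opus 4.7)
By Poincar\'e duality, unimodality of the even Betti numbers of the $8$-manifold $M$ reduces to the single inequality $b_2(M) \leq b_4(M)$. Theorem \ref{eightunimodal} already establishes this inequality whenever neither extremal fixed component of the given Hamiltonian $S^1$-action on $M$ has dimension $4$. The plan is therefore to use the symplectic rationality hypothesis precisely to dispatch the remaining case in which, after possibly replacing $H$ by $-H$, the minimum $M_{\min}$ has dimension $4$.

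By definition, symplectic rationality provides a connected symplectic $10$-manifold $(N,\Omega)$ carrying a semi-free Hamiltonian $S^1$-action whose reductions at two regular levels $c_0<c_1$ are symplectomorphic to $(\mathbb{CP}^4,\omega_{FS})$ and $(M,\omega)$. The first step is to invoke the wall-crossing formula for the Poincar\'e polynomials of reduced spaces (in the spirit of Guillemin--Kalkman and S.~Martin): as the level parameter varies from $c_0$ to $c_1$, $P_{N_t}(t)$ changes only when crossing the finitely many critical values of $\Phi:N\to\mathbb{R}$, and at each such crossing the change is controlled by the Poincar\'e polynomial of the corresponding fixed component $Z\subset N$ together with the splitting of its normal bundle into positive and negative weight parts. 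Because the action on $N$ is semi-free, each crossing is an honest birational modification --- a blow-down of a projective bundle over $Z$ followed by a blow-up of the complementary one --- rather than an orbifold operation.

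The second step is to analyze these modifications in isolation. For the blow-up of a symplectic $8$-manifold $X$ along a symplectic submanifold $Z$ of complex codimension $c$, the Poincar\'e polynomial changes by $P_Z(t)\bigl(t^2+t^4+\cdots+t^{2c-2}\bigr)$; since $Z$ has real dimension at most $6$ it is itself unimodal by Poincar\'e duality, and one checks directly that adding such a contribution preserves unimodality in degrees $2$ and $4$. The full semi-free crossing is then a signed combination of two such contributions, parameterized by weight data $(p,q)$ with $p+q=5-\dim_{\mathbb{C}}Z$; the goal is to verify in a finite case analysis that the net change preserves unimodality. Iterating from the unimodal base case $\mathbb{CP}^4$ would then yield the conjecture.

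The principal obstacle --- and the reason the statement is left as a conjecture --- is that the $S^1$-action on $N$ produced by symplectic rationality has no a priori relationship with the given Hamiltonian $S^1$-action on $M$, so the hypothesis ``$M_{\min}$ is $4$-dimensional'' cannot be transferred directly into the wall-crossing argument. A natural alternative is to bypass the $S^1$-action altogether and prove a weak Lefschetz property for symplectically rational $8$-manifolds, namely that cup product with $[\omega]$ gives an injection $H^2(M)\hookrightarrow H^4(M)$, which would immediately imply $b_2(M)\leq b_4(M)$. Establishing such a weak Lefschetz statement along the semi-free reduction chain --- where the cohomology class of the symplectic form varies linearly by the Duistermaat--Heckman theorem (Theorem \ref{dhthe}) --- is, to my knowledge, the genuine difficulty.
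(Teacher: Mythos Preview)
The statement is a \emph{conjecture} in the paper; no proof is offered, only a motivating remark. So there is nothing to match, but your strategy can still be compared with the paper's heuristic.

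The paper's heuristic is quite different from yours and stays within the framework of Proposition~\ref{general}. Immediately after stating the conjecture, the author observes that in the \emph{algebraic} category the bad case of Theorem~\ref{eightunimodal} can be handled directly: if a complex projective rational $4$-fold $M$ carries an algebraic torus action with a surface $N$ as an extremal fixed component, then a GIT argument makes $M$ birational to a weighted projective bundle over $N$, so $N$ is stably rational and hence rational by Zariski. A rational surface has $b_2^{+}=1$, so $I_{JR}(N)=0$ by Lemma~\ref{dlemma}, and the proof of Theorem~\ref{eightunimodal} goes through unchanged. The implicit symplectic strategy is therefore: use symplectic rationality of $M$ to force any $4$-dimensional extremal component $N$ of the \emph{given} $S^1$-action on $M$ to satisfy $b_2^{+}(N)=1$, and then feed this into Proposition~\ref{general}. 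This keeps the given action in play rather than discarding it.

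Your approach instead tracks Poincar\'e polynomials along the semi-free reduction chain from $\mathbb{CP}^4$. You correctly flag that the ambient action is unrelated to the given one, but there is a second gap you gloss over. A semi-free crossing at a fixed component $Z$ with weight data $(p,q)$ changes the Poincar\'e polynomial of the reduced space by a \emph{signed} term of the form $\pm P_Z(t)\,(t^{2a}+\cdots+t^{2b})$, and it is not at all clear that subtracting such a term from a unimodal sequence leaves it unimodal; for instance a crossing at an isolated point with $(p,q)=(3,2)$ changes the polynomial by $\pm t^{4}$, which could in principle push $b_4$ below $b_2$. Ruling this out for every intermediate reduced $8$-manifold would require exactly the kind of weak-Lefschetz input you postpone to the final paragraph, so the finite case analysis you describe in step two does not by itself carry the argument.
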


 If a complex, projective and rational $4$-fold $M$ has such an algebraic torus action having an algebraic surface $N$ as an extremal component, then $M$ is birational to a weighted projective space bundle over $N$ via a standard GIT argument. Therefore $N$ is stably rational and hence rational \cite{Z}.  Hence $N$ contributes $0$ to the equation of Proposition \ref{general} and the proof of Theorem \ref{eightunimodal} goes through.

We also obtain a similar result to Theorem \ref{eightunimodal} for closed $12$-manifolds with second Betti number equal to $1$. To the authors knowledge this result is new, under the stronger assumption of isolated fixed points.

\begin{proposition} \label{twelve}
Suppose that $(M,\omega)$ is a closed symplectic $12$-manifold such that $b_{2}(M) = 1$. Suppose that $M$ admits a Hamiltonian $S^1$-action with no $4-$ or $8-$dimensional fixed components, then the even Betti numbers of $M$ are unimodal.
\end{proposition}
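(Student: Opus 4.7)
The plan is to reduce everything to Proposition \ref{general}, which becomes especially sharp under the stated hypotheses. For a closed symplectic $12$-manifold, the possible dimensions of fixed components of a nontrivial Hamiltonian $S^1$-action are $0,2,4,6,8,10$. The sum on the right hand side of Equation (\ref{equa}) runs over fixed components of positive dimension divisible by $4$, i.e., of dimension $4$ or $8$. By assumption there are none, so Proposition \ref{general} gives immediately
\[
I_{JR}(M)=0,\qquad\text{i.e.,}\qquad \sigma(M)=\sum_{i\in\mathbb{Z}}\bigl(b_{4i}(M)-b_{4i+2}(M)\bigr).
\]

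Next I would expand this identity using Poincar\'e duality and the hypothesis $b_2(M)=1$, which forces $b_{10}(M)=1$ and $b_8(M)=b_4(M)$. Writing $\sigma(M)=b_6^+(M)-b_6^-(M)$ and $b_6(M)=b_6^+(M)+b_6^-(M)$, the right hand side becomes $2+2b_4(M)-2-b_6(M)=2b_4(M)-b_6(M)$, so the displayed identity simplifies to
\[
b_6^+(M)=b_4(M).
\]

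The last step is to extract the unimodality $b_2(M)\le b_4(M)\le b_6(M)$. The upper inequality is immediate from $b_4(M)=b_6^+(M)\le b_6(M)$. For the lower one, note that $[\omega]^3\in H^6(M,\mathbb{R})$ pairs positively with itself since $\int_M[\omega]^6>0$, so it spans a positive-definite subspace for the intersection form, giving $b_6^+(M)\ge 1$, hence $b_4(M)\ge 1=b_2(M)$. Combined with Poincar\'e duality $b_8(M)=b_4(M)$, $b_{10}(M)=b_2(M)$, $b_{12}(M)=b_0(M)=1$, this yields unimodality of all even Betti numbers.

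I do not expect a genuine obstacle here: once Proposition \ref{general} is in hand, the only real content is the observation that the sum on its right hand side is empty under the hypothesis, and that $b_6^+(M)\ge 1$ holds for any closed symplectic $12$-manifold. The one mildly delicate point is to rule out the trivial $S^1$-action (where $M$ itself would count as a $12$-dimensional fixed component not covered by Equation (\ref{equa})); but a Hamiltonian action is taken here to be nontrivial, so all fixed components are proper submanifolds and the argument above applies verbatim.
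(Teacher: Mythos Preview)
Your proof is correct and follows essentially the same route as the paper: apply Proposition~\ref{general} under the hypothesis to get $I_{JR}(M)=0$, expand using Poincar\'e duality and $b_2=1$, and read off the inequalities. Your derivation of the equality $b_6^{+}(M)=b_4(M)$ is a mild sharpening of the paper's argument (which only invokes $\sigma(M)\le b_6(M)$), and you make explicit the step $b_2(M)\le b_4(M)$ via $b_6^{+}(M)\ge 1$, which the paper leaves implicit.
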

\begin{proof}
By Proposition \ref{general} we have that $I_{JR}(M) = 0$. Since  $ \sigma(M) \leq b_{6}(M),$ by Poincar\'{e} duality, $$I_{JR}(M)=0 \; \implies \; 2b_{4}(M) -b_{6}(M) \leq b_{6}(M) ,$$ and so the result follows.\end{proof}

Next we prove two inequalities on the Betti numbers of a closed symplectic manifold admitting a Hamiltonian $S^1$-action, under the assumption that there is no fixed components of positive dimension divisible by $4$. Under the assumption of isolated fixed points,  the inequalities were proved by Cho \cite[Theorem 1.2]{C}. Both inequalities may be considered as evidence for the unimodality of even Betti numbers of manifolds admitting a Hamiltonian $S^1$-action, in the sense that they would be implied by the unimodality. We formulate this as a remark after Proposition \ref{ineq}  
\begin{proposition} \label{ineq}
Suppose that $(M,\omega)$ is a closed symplectic manifold of dimension $2n$ having a Hamiltonian $S^1$-action, such that there is no fixed component of positive dimension divisible by $4$. Then, \begin{enumerate}
\item If  $\dim(M) = 0 \mod 8$  then: $$0 <  1 - b_{2}(M) +  b_{4}(M) -  \dots + b_{n}(M). $$

\item If $\dim(M) = 4 \mod 8$ then: $$ 1 - b_{2}(M) +  b_{4}(M) -  \dots - b_{n}(M)  \leq 0.$$
\end{enumerate}
\end{proposition}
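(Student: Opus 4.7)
The plan is to derive both inequalities from Proposition \ref{general} together with Poincar\'e duality and the fact that $b_n^+(M) \geq 1$ whenever $n$ is even (since $[\omega]^{n/2}$ is a nonzero class with positive self-intersection in middle degree cohomology). The hypothesis on the fixed point set immediately kills the right-hand side of Equation (\ref{equa}), so I would begin by invoking Proposition \ref{general} to conclude that $I_{JR}(M)=0$, i.e.
\[
\sigma(M) = \sum_{j=0}^{n} (-1)^{j} b_{2j}(M).
\]
Denote this common value by $S$; the entire argument consists of computing $S$ in two different ways depending on $n \pmod 2$.

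Next I would use Poincar\'e duality, which gives $b_{2j}(M) = b_{2(n-j)}(M)$, to fold the sum $S$ around its middle index. In Case 1 ($\dim M = 8k$, so $n = 4k$), substituting $j' = n - j$ in the upper half of the sum and using that $(-1)^{n}=1$ shows
\[
S \;=\; 2\sum_{j=0}^{2k-1} (-1)^{j} b_{2j}(M) + b_{n}(M).
\]
Comparing with the desired alternating sum $T_1 := 1 - b_2(M) + \cdots + b_n(M) = \sum_{j=0}^{2k}(-1)^j b_{2j}(M)$, one gets $2 T_1 = S + b_n(M) = \sigma(M) + b_n(M) = 2 b_n^+(M)$. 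Hence $T_1 = b_n^+(M) \geq 1 > 0$, which is exactly statement (1).

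In Case 2 ($\dim M = 8k+4$, so $n = 4k+2$), the same folding procedure instead yields
\[
S \;=\; 2\sum_{j=0}^{2k} (-1)^{j} b_{2j}(M) - b_{n}(M),
\]
and the desired alternating sum $T_2 := 1 - b_2(M) + \cdots - b_n(M) = \sum_{j=0}^{2k+1}(-1)^j b_{2j}(M)$ satisfies $2T_2 = S - b_n(M) = \sigma(M) - b_n(M) = -2 b_n^-(M) \leq 0$. This gives statement (2).

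There is no real obstacle: the only ingredients are Proposition \ref{general}, Poincar\'e duality and the elementary inequality $b_n^+ \geq 1$ for symplectic manifolds of middle dimension $n$ even. The modest subtlety to handle cleanly in writing is the bookkeeping of signs when folding $S$ about the middle index, which depends on whether $n \equiv 0$ or $n \equiv 2 \pmod 4$; this is precisely why the two cases of the proposition have opposite inequality directions and produce $b_n^+(M)$ versus $-b_n^-(M)$ after the fold.
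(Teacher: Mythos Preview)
Your proof is correct and follows essentially the same approach as the paper's: invoke Proposition~\ref{general} to obtain $I_{JR}(M)=0$, then fold the alternating Betti-number sum via Poincar\'e duality and use the signature bounds $-b_n(M)<\sigma(M)\le b_n(M)$, with strictness in case (1) coming from $[\omega]^{n/2}$ having positive square. Your write-up is in fact slightly sharper, identifying the alternating sums exactly as $b_n^+(M)$ and $-b_n^-(M)$ rather than just bounding them.
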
  
\begin{proof}

By assumption all terms on the right hand side of the equation given in Equation (\ref{equa}) of Proposition \ref{general} vanish. Thus by Proposition \ref{general} $M$ satisfies the JR equation. 

The first inequality follows from the the inequality $- b_{n}(M) < \sigma(M) $, after putting all terms to the left hand side, applying Poincar\'{e} duality and dividing by $2$. In this case the inequality is strict because the equality case is equivalent to the symplectic form being negative definite, and the top power of the symplectic form is positive.

The second inequality then follows from the inequality $\sigma(M) \leq  b_{n}(M)$, after applying Poincar\'{e} duality and dividing by $2$. \end{proof}

\begin{remark}
Suppose that $(M,\omega)$ is a closed symplectic $2n$-manifold and the even Betti numbers are unimodal. Then the inequalities of Proposition \ref{ineq} hold.
\end{remark}
To see this, suppose that $4|n$ and $b_{2i}(M) \leq b_{2i+2}(M)$ for all integers $i$ such that $2i+2 \leq n$. Then rewriting the quantity $1-b_2+b_4 - \ldots + b_{n}$, as $$1+(-b_2+b_4 )+\ldots+ (-b_{n-2} + b_{n}) ,$$ shows that it is manifestly positive. The case $n=2 \mod 4$ is proved similarly (as in Proposition \ref{ineq} the inequality is not strict in this case). We remark also that the inequalities of Proposition \ref{ineq} imply that the possibilities for any particular Betti number of the form $b_{k}$ with $k = 2n \mod 4$ are bounded, if all of the other Betti numbers are fixed, provided $k \neq n$.

We finally state a proposition regarding the Betti numbers of spin manifolds admitting a Hamiltonian $S^1$-action. 

\begin{proposition} \label{roapp}
Let $(M,\omega)$ be a closed symplectic manifold of dimension $2n$ with a Hamiltonian $S^1$-action having no fixed component of positive dimension divisible by $4$. Suppose that $n = 2 \mod 4$, that $M$ is spin, and that $b_{2i}(M)=1$ for $2i \neq n$. Then $b_{n}(M) = 2 \mod 16$.
\end{proposition}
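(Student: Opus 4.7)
The plan is to combine Proposition \ref{general} with Ochanine's refinement of Rokhlin's theorem, which states that a closed spin manifold of dimension $8k+4$ has signature divisible by $16$. Since $\dim(M) = 2n \equiv 4 \pmod 8$ and $M$ is spin by hypothesis, Ochanine's theorem applies directly and gives $\sigma(M) \equiv 0 \pmod{16}$. The main task is therefore to extract $b_n(M)$ from $\sigma(M)$ via the JR equation.

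First I would apply Proposition \ref{general}. By assumption there are no fixed components of positive dimension divisible by $4$, so every term on the right hand side of Equation (\ref{equa}) is zero and hence $I_{JR}(M) = 0$. This yields
\begin{equation*}
\sigma(M) = \sum_{i \in \mathbb{Z}} \bigl( b_{4i}(M) - b_{4i+2}(M) \bigr).
\end{equation*}

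Next I would evaluate this sum explicitly using the hypothesis $b_{2i}(M) = 1$ for $2i \neq n$. Writing $n = 4k + 2$, the even degrees in the range $0 \leq 2i \leq 2n = 8k+4$ split into the $2k+2$ multiples of $4$ (namely $0, 4, 8, \ldots, 8k+4$) and the $2k+1$ degrees congruent to $2 \pmod 4$ (namely $2, 6, \ldots, 8k+2$). Since $n = 4k+2$ lies in the second family, the corresponding Betti number contributions are
\begin{equation*}
\sum_i b_{4i}(M) = 2k+2, \qquad \sum_i b_{4i+2}(M) = b_n(M) + 2k,
\end{equation*}
so that $\sigma(M) = 2 - b_n(M)$.

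Finally, combining this identity with Ochanine's theorem gives $2 - b_n(M) \equiv 0 \pmod{16}$, i.e. $b_n(M) \equiv 2 \pmod{16}$, as required. There is no real obstacle here: both Proposition \ref{general} and Ochanine's theorem are direct inputs, and the only step with any content is the bookkeeping of even-degree Betti numbers, which is immediate once one writes $n = 4k+2$ and notices that $n$ itself is congruent to $2$ mod $4$.
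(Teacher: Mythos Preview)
Your proof is correct and follows the same route as the paper: apply Proposition \ref{general} to obtain $I_{JR}(M)=0$, simplify the alternating sum of even Betti numbers to $2 - b_n(M)$ using the hypothesis $b_{2i}(M)=1$ for $2i \neq n$, and then invoke the Rokhlin--Ochanine theorem. Your write-up simply makes the Betti number bookkeeping more explicit than the paper does.
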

\begin{proof}

By assumption all terms on the right hand side of the equation given in Equation (\ref{equa}) of Proposition \ref{general} vanish. Thus by Proposition \ref{general} $M$ satisfies the JR equation. By the assumptions the alternating sum of even degree Betti numbers simplifies as $2 - b_{n}(M)$. By the Rokhlin-Ochanine theorem \cite{R,O}, the signature is divisible by $16$, and so the result follows. \end{proof}

\appendix

\section{Two technical lemmas regarding symplectic reduction}
The two following technical lemmas regarding symplectic reduction are included as they are needed in the proof of Theorem \ref{main}. They are both direct consequences of known results; the Duistermaat-Heckman Theorem \cite[Theorem 1.1]{DH} and the real blow-up theorem of Guilemin and Sternberg \cite[Theorem 10.1]{GS} after some minor elaboration. 

Throughout the section, when referring to the variation of the cohomology class of the reduced symplectic form we mean in the standard way in terms of the flow of the gradient vector field as in \cite[Theorem 1.1]{DH}. To make the appendix more readable we keep this understanding without making explicit reference to it. 
\subsection{Behavior of the reduced symplectic form near extremal values}

The first lemma gives information about how the cohomology class of the reduced symplectic form behaves close to an isolated, extremal fixed point.

\begin{lemma} \label{limzero}
Let $(M,\omega)$ be a closed symplectic manifold with a Hamiltonian $S^1$-action and Hamiltonian $H$. Suppose that $M_{\min}$ is an isolated fixed point. Then, the limit of $[\omega_c]$ as $c$ approaches $H_{\min}$ from above is $0 \in H^{2}(M_{H_{\min}+\epsilon},\mathbb{R})$.
\end{lemma}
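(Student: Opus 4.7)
The plan is to reduce to an explicit linear model near the isolated minimum $p = M_{\min}$ via the equivariant Darboux theorem, where the reduced spaces are weighted projective spaces whose reduced symplectic classes manifestly scale linearly with $c - H_{\min}$.

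More precisely, the first step is to apply the equivariant Darboux theorem to obtain an $S^1$-equivariant symplectomorphism $\Phi$ from an $S^1$-invariant open neighborhood $U$ of $p$ onto a neighborhood of $0 \in \mathbb{C}^n$ equipped with the standard form $\omega_0 = \tfrac{i}{2}\sum dz_j \wedge d\overline{z}_j$, with $S^1$ acting diagonally by strictly positive weights $w_1,\ldots,w_n$ (positivity coming from the fact that $p$ is the minimum of $H$). After adjusting by an additive constant, $H$ pulls back to $H\circ\Phi^{-1}(z) = H_{\min} + \pi \sum_j w_j |z_j|^2$.

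The second step is to compute the reduction explicitly in this model. For all sufficiently small $c > 0$, the level set $H^{-1}(H_{\min}+c)$ lies inside $U$ and is identified with the ellipsoid $E_c = \{\pi \sum_j w_j |z_j|^2 = c\}$; the reduced space $M_{H_{\min}+c}$ is therefore identified with $E_c / S^1$, the weighted projective space $\mathbb{P}(w_1,\ldots,w_n)$ with reduced form $\omega_c^{\mathrm{red}}$. The key observation is that the holomorphic rescaling $z \mapsto \sqrt{\lambda}\, z$ maps $E_c$ to $E_{\lambda c}$ equivariantly and pulls $\omega_0$ back to $\lambda\, \omega_0$, so the induced map on quotients rescales $[\omega_c^{\mathrm{red}}]$ by $\lambda$. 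Hence $[\omega_c^{\mathrm{red}}] = c\cdot \eta$ for a fixed $\eta \in H^2(\mathbb{P}(w_1,\ldots,w_n), \mathbb{R})$, which tends to $0$ as $c \to 0^+$.

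Finally, the two identifications of nearby reduced spaces, the one coming from the gradient flow $F_{a,b}$ used in Theorem \ref{dhthe} and the one coming from the Darboux chart $\Phi$, are both orbifold diffeomorphisms between the same weighted projective orbifolds, and they induce the same isomorphism on $H^2(\cdot,\mathbb{R})$ (alternatively, by the Duistermaat--Heckman theorem, $[\omega_{H_{\min}+c}]$ depends affine-linearly on $c$ in the fixed space $H^2(M_{H_{\min}+\epsilon},\mathbb{R})$, so knowing its limit via any one identification suffices). Either way, the limit of $[\omega_{H_{\min}+c}]$ as $c \to 0^+$ is $0$. The main subtlety I expect is this last compatibility step, but it is handled either by invoking Duistermaat--Heckman's affine-linearity together with the scaling argument on any single slice, or equivalently by the real blow-up theorem of Guillemin--Sternberg cited at the start of the appendix.
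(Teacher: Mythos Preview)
Your proposal is correct and follows essentially the same route as the paper: apply the equivariant Darboux/symplectic neighborhood theorem to pass to the linear model on $\mathbb{C}^n$ with positive weights, and then read off the reduced class there. The only minor difference is in the concluding step: the paper computes the symplectic volume $\int [\omega_{H_{\min}+x}]^{n-1} = x^{n-1}/\prod_i w_i \to 0$ and uses $b_2$ of the weighted projective space being $1$ to deduce $[\omega_{H_{\min}+x}]\to 0$, which sidesteps the compatibility-of-identifications issue you flag, whereas you argue via the scaling map $z\mapsto\sqrt{\lambda}\,z$ directly.
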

\begin{proof}
Let $w_{1}, \ldots, w_n \in \mathbb{Z} $ be the weights at $M_{\min}$. Note that $w_i > 0$ for each $i$. Consider the linear Hamiltonian $S^1$-action on $\mathbb{R}^{2n} \cong \mathbb{C}^n$ with the standard symplectic form $\tilde{\omega}$ given by, for $\lambda \in S^1$

$$\lambda \cdot (z_{1},\ldots, z_{n}) = (\lambda^{w_1}z_1 , \ldots, \lambda^{w_n}z_n ),$$ with Hamiltonian $$\mu = \sum_{i=1}^{n} w_i|z_i|^2.$$ Note that $\mu$ is a proper function, therefore for any neighborhood $U$ of $0=\mu^{-1}(0)$ there is some positive constant $\varepsilon$, such that $\mu^{-1}(x) \subset U$  for all $0< x \leq \varepsilon$.

By the equivariant symplectic neighborhood theorem \cite[Theorem A.1]{Ka} (attributed to \cite{Ma}), a neighborhood of $M_{\min} = \{ p \}$ in $M$ and a neighborhood of $0$ in $\mathbb{R}^{2n}$ are equivariantly symplectomorphic, denote by $U_1 \subset M$, $U_2 \subset \mathbb{R}^n$ and $\varphi: U_{1} \rightarrow U_{2}$ the corresponding neighborhoods and equivariant symplectomorphism. Note that since $\varphi$ is $S^1$-equivariant and $0$ is the unique fixed point in $\mathbb{R}^{2n}$ and hence in $U_{2}$, it holds that $\varphi(p) = 0$. 

 By pulling the Hamiltonian equations back along $\varphi_{*},$ it holds that $d(\mu \circ \varphi) = dH$, therefore $$\mu \circ \varphi = H + k,  $$ for some $k \in \mathbb{R}.$ Since the global minimum of $\mu$ is $0$ and global minimum of $H$ is $H(M_{\min}) := H_{\min}$, $k = -H_{\min}$.

 As was observed above, there is a possibly smaller neighborhood $U \subset U_{2}$ that is saturated with respect to $\mu$, therefore on the open subset $\varphi^{-1}(U)$, $\varphi$ maps $H^{-1}(H_{\min} + x)$ to $\mu^{-1}(x)$ by an equivariant diffeomorphism, preserving the restriction of the symplectic form to these submanifolds. This implies that the reduced spaces $M_{H_{\min}+ x}$ and $(\mathbb{R}^{2n})_{x}$ are orbifold symplectomorphic.

In particular, $$\int_{M_{(H_{\min} + x)}} [\omega_{(H_{\min} + x)}]^{n-1} = \int_{(\mathbb{R}^{2n})_{x}} [\tilde{\omega}_{x}]^{n-1} = \frac{x ^{n-1} }{\prod_{i=1}^{n} w_i}.$$
For the computation of the class of the reduced space for linear Hamiltonian $S^1$-actions of $\mathbb{R}^{2n}$ one may compute directly or see \cite{GLS2}. In particular the limit of the left hand side is zero as $x$ tends to zero from above. Since $M_{(H_{\min} + \varepsilon)}$ is orbifold diffeomorphic to a weighted projective space, $b_{2}(M_{(H_{\min} + \varepsilon)}) =1$ \cite[Theorem 1]{Kaw}, and therefore the above equation implies the required claim about the limit of the cohomology class. \end{proof}

\subsection{Behavior of the reduced symplectic form near certain fixed submanifolds}

The following other technical lemma we will need is a consequence of \cite[Theorem 10.1]{GS2}, after some minor elaboration (see the proof below). The above theorem gives a way to resolve the reduce space of a closed symplectic manifold having a simple fixed component with non-zero weights $-1,1,\ldots,1$. The following lemma roughly states that, in the most simple situation when the non-zero weights are $-1,1$ then it does not matter whether this resolution is performed via the upward or downward flowing normal bundle. This fact is alluded to in \cite{GS2} immediately below \cite[Theorem 10.2]{GS2}, we simply put it on a footing suitable for the purposes of proving Theorem \ref{main}.

\begin{lemma}{cf. \cite[Theorem 10.1, Theorem 10.2]{GS2}} \label{realblowup} 
Let $(M,\omega)$ be a closed symplectic manifold having a Hamiltonian $S^1$-action with Hamiltonian $H: M \rightarrow \mathbb{R}$. Suppose that $F$ is a fixed component of codimension $4$ with non-zero weights $-1,1$. Moreover suppose that $H(F)=0$ and $H^{-1}(0) \cap M^{S^1} = F$. Let $L_{-}$ and $L_{+}$ denote the line subbundles of the normal bundle of $F$ on which $S^1$ acts with weight $-1$ and $1$ respectively. Let $\varepsilon_0$ be chosen sufficiently small so that $H^{-1}(-\varepsilon_0,\varepsilon_0) \cap M^{S^1} = F$. 

Then, there exist symplectic manifolds $(M_{-},\omega_{-})$ and $(M_{+},\omega_{+})$ each with a Hamiltonian $S^1$-action with Hamilltonians $H_{-},H_{+}$ respectively, and $S^1$-equivariant continuous maps $\Psi_{-} :H_{-}^{-1}(-\infty,0] \rightarrow H^{-1}(-\infty,0]$ and $ \Psi_{+}: H_{+}^{-1}[0,\infty) \rightarrow H^{-1}[0,\infty)$. Such that \begin{enumerate} 

\item[a).]  The restriction of $\Psi_{-}$ (resp. $\Psi_{+}$) to  $\Psi_{-}^{-1}(H^{-1}(-\infty,0] \setminus F)$  (resp. $\Psi_{+}^{-1}(H^{-1}(-\infty,0] \setminus F)$)  is an equivariant symplectomorphism.  The restriction of $\Psi_{-}$ (resp. $\Psi_{+}$) to  $\Psi_{-}^{-1}(F)$  (resp. $\Psi_{+}^{-1}( F)$)  is a principal $S^1$-bundle.  These two principal $S^1$-bundles are isomorphic to the unit sphere bundle of $L_{-}$ and $L_{+}$ respectively. Furthermore, $H \circ \Psi_{\pm} = H_{\pm}$; in particular, $\Psi_{\pm}$ induces homeomorphisms  $$\bar{\Psi}_{\pm} : H_{\pm}^{-1}(0)/S^1 \rightarrow H^{-1}(0)/S^1.$$

 \item[b).] $H^{-1}_{\pm}(-\varepsilon_0,\varepsilon_0)$ does not contain fixed points.

\item[c).] Set $F_{\pm} := \bar{\Psi}_{\pm}^{-1}(F) /S^1.$ Then $\bar{\Psi}_{\pm}: (F_{\pm},(\omega_{\pm})_{0}) \rightarrow (F,\omega|_{F})$ is a symplectomorphism and  $$\bar{\Psi}_{\pm}^{*}(c_{1}(L_{\pm})) = e(H_{\pm}^{-1}(0))|_{F_{\pm}}.$$

  \end{enumerate}

\end{lemma}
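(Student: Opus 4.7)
The strategy is to read off both halves of the lemma from the symplectic real blow-up theorem \cite[Theorem 10.1]{GS2}, applied twice: first to the action on $M$ to produce $(M_-, \omega_-, H_-, \Psi_-)$, and then to the action with reversed generating vector field $-X$ (and Hamiltonian $-H$) to produce $(M_+, \omega_+, H_+, \Psi_+)$. Using the equivariant symplectic neighborhood theorem, a tubular neighborhood of $F$ in $M$ is identified equivariantly and symplectically with the total space of $L_- \oplus L_+ \to F$, equipped fibrewise with the local Hamiltonian $-|z_-|^2 + |z_+|^2$. Fibrewise this is the standard weight $(-1,1)$ toy model on $\mathbb{C}^2$, whose reduced spaces at nonzero levels are smooth but whose level-$0$ reduction has a single conical singularity, precisely the situation resolved by the Guillemin--Sternberg construction.

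I would first apply \cite[Theorem 10.1]{GS2} to $(M, \omega, H)$, which produces a closed symplectic manifold $M_-$ carrying a Hamiltonian $S^1$-action and an equivariant continuous surjection onto a neighborhood of the critical level from below. Restricting to $H_-^{-1}(-\infty, 0]$ and setting $H_- := H \circ \Psi_-$ gives the first pair. Running the same theorem on the reversed action and then negating the resulting Hamiltonian yields the second pair, with the roles of $L_-$ and $L_+$ exchanged. Properties (a) and (b) can then be read off: (a) is exactly the conclusion of the Guillemin--Sternberg theorem together with the identity $H \circ \Psi_\pm = H_\pm$ built into the construction; (b) follows from (a) and the hypothesis $H^{-1}(-\varepsilon_0, \varepsilon_0) \cap M^{S^1} = F$, since $\Psi_\pm$ is an equivariant symplectomorphism off the exceptional locus (so fixed points correspond) while the $S^1$-action on the exceptional principal bundle is free by construction.

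For (c), I would observe that $F_\pm = \bar\Psi_\pm^{-1}(F)/S^1$ is the base of a principal $S^1$-bundle over $F$, hence canonically diffeomorphic to $F$; the reduced form $(\omega_\pm)_0$ at level $0$ pulls back under $\bar\Psi_\pm$ to $\omega|_F$ by continuity of $\Psi_\pm$ and the agreement of the symplectic forms away from the exceptional locus. The Euler class identity $\bar\Psi_\pm^*(c_1(L_\pm)) = e(H_\pm^{-1}(0))|_{F_\pm}$ then reduces to the statement that the principal $S^1$-bundle $H_\pm^{-1}(0) \to H_\pm^{-1}(0)/S^1$, restricted over $F_\pm$, is the Hopf bundle of $L_\pm$, which is precisely how the exceptional divisor is identified in part (a).

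The main obstacle is orientation/bookkeeping: \cite[Theorem 10.1]{GS2} is formulated for a normal weight pattern $(-1, 1, \dots, 1)$ with a distinguished contraction direction, and one has to verify carefully, in the symmetric weight $(-1, 1)$ case, that the construction can indeed be run on each side of level $0$ and that the exceptional divisors are identified with $S(L_-)$ and $S(L_+)$ respectively, rather than with the opposite choice. Once this identification is pinned down at the level of the local model on $\mathbb{C}^2$, the rest of the argument is routine.
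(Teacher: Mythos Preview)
Your proposal is correct and follows essentially the same route as the paper: both derive the lemma from the Guillemin--Sternberg real blow-up theorem, applied once in each direction (the paper phrases this as invoking \cite[Theorem~10.2]{GS2} and noting that its proof goes through whenever the action is semi-free near $F$). Parts a) and b), as well as the Euler-class identity in c), are read off from the statement of that theorem in both treatments.

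The one point of difference is in how the first half of c)---that $\bar\Psi_\pm$ restricts to a symplectomorphism $(F_\pm,(\omega_\pm)_0)\to(F,\omega|_F)$---is justified. You argue by continuity: $\Psi_\pm$ is a symplectomorphism off the exceptional locus, so the reduced forms agree on a dense open set and hence on all of $F_\pm$. The paper instead unpacks the construction explicitly, writing the local model as a symplectic quotient of $V^*(P)\times\mathbb{C}^2$ by $S^1\times S^1$ (where $P$ is the frame bundle of $L_-\oplus L_+$), identifying the preimage of $F$ in both the original and blown-up pictures, and checking directly that the induced map on quotients is symplectic. Your continuity argument is more efficient but is slightly glib as stated: the ``target'' reduced space $H^{-1}(0)/S^1$ is singular along $F$, so there is no globally defined reduced form there to pull back, and one has to say carefully what the limiting statement means (e.g.\ that the smooth form $(\omega_\pm)_0$ on the resolved space, restricted to $F_\pm$, agrees with the pullback of $\omega|_F$ under the diffeomorphism $F_\pm\to F$). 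This can be made rigorous without difficulty, but the paper's explicit computation avoids the issue entirely.
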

\begin{proof}
The proof is exactly as in \cite[Theorem 10.2]{GS2}. Parts a).,b). and the second half of c). follow directly from the statement \cite[Theorem 10.2]{GS2} when one notes that the proof of \cite[Theorem 10.2]{GS2} holds whenever the action is semi-free on a neighborhood of the fixed submanifold in question, i.e. all non-zero weights are $\pm1$ (this is called quasi-free in \cite{GS2} but the term semi-free has become more commonly used). To extract the first half of c). we have to recall the proof of \cite[Theorem 10.2]{GS2}. 

Before proceeding to the proof, we consider the case when $F$ is an isolated fixed point, which contains the key idea and helps set up notation for the general case. Consider $\mathbb{C}^2$ with the standard Hamiltonian $S^1$-action with weights $-1,1$ and let $R$ be the symplectic manifold $S^1 \times \mathbb{R} \times \mathbb{C}$ with the Hamiltonian $S^1$-action $\lambda.(w,r,z') = (\lambda^{-1}w,r,\lambda z')$. The key observation of the real blow-up construction is that there is an $S^1$-equivariant map $$ \psi : \{(w,r,z') \in R \mid r \leq 0\} \rightarrow \mathbb{C}^2  $$ which restricts to the subset $  \{(w,r,z') \in R \mid r <  0\} $ as an equivariant symplectomorphism with image $\{ (z_1,z_2) \in \mathbb{C}^2 \mid z_1 \neq 0  \}$ \footnote{The map is given by polar coordinates with a suitable radial factor, see \cite[Section 10]{GS} for the explicit formula.}.  Therefore we can resolve the fixed point of the action on $\mathbb{C}^2$ by gluing these neighborhoods. In the case where $F$ is a submanifold of positive dimension the construction may be globalized using the the machinery of principal bundles.

Let $N = L_{-} \oplus L_{+}$ be the equivariant splitting of the normal bundle of $F$, and $S_{-},S_{+}$ the unit sphere bundles of $L_{-},L_{+}$. Define a bundle of unitary frames of $N$ respecting the splitting, $P|_{p} = \{(v_1,v_{2}) \mid v_1 \in  L_{-}|_{p}, v_{2} \in L_{+}|_{p} \}$. Then $P$ is an $S^1 \times S^1$-principal bundle. Let  $\mathfrak{t}$ denote the Lie algebra of $S^1 \times S^1$. The classical fact that the cotangent bundle of a Lie group is trivial implies that the the vertical cotangent bundle of $V^*(P)$ is isomorphic to  $$T^{*}(V^*(P)) \cong V^*(P) \times \mathfrak{t}.$$ Let $\omega_{cot}$ denote the natural symplectic form on $T^{*}(P)$, one may also verify the action of $S^1 \times S^1$ is Hamiltonian with respect to $\omega_{cot}$ and with moment map $\mu_1$ equal to the projection to the $\mathfrak{t}$ factor.

 The choice of a connection on $P$, gives rise to a natural inclusion $i$ of the vertical cotangent bundle $V^{*}P$ into $T^{*}(P)$, and $\pi^{*}\omega_{F} + i^{*} \omega_{cot}$ is an invariant symplectic form on a neighborhood of the zero section of $V^*(P).$ 

Let $S^1 \times S^1$ act on $V^*(P) \times \mathbb{C}^2$ by $g \cdot (p_1,p_{2}) = (gp_1,g^{-1}p_{2})$. This action is Hamiltonian with moment map $\mu_{1} - \mu_{2}$, where $\mu_2$ is the moment map for the standard toric action on $\mathbb{C}^2$. Via that principal bundle-vector bundle correspondence, it may be shown that $\mu^{-1}(0)/S^1 \times S^1$ is diffeomorphic to the total space of $N$.

 To see this, define a map $m: P \times \mathbb{C}^2 \rightarrow \mu^{-1}(0)$, $$m(p,v) = (p,-\mu_{2}(v), v).$$ Here we are using the fact that the vertical cotangent bundle is canonically trivial, as explained above. Then it may be checked that $m$ is a $S^1 \times S^1$-equivariant diffeomorphism, then the claim follows from standard application of the principal bundle-vector bundle correspondence that $P \times \mathbb{C}^2/S^1\times S^1$ and  hence $\mu^{-1}(0)/S^1 \times S^1$ are diffeomorphic to the total space of $N$. For the proof of the first half of c). it is helpful to identify the preimage of $F$ in $\mu^{-1}(0)$ by the quotient map $\pi$ (more precisely the composition of two quotient maps: the $S^1 \times S^1$-action to the obtain the local model around $F$ and the $S^1$-quotient to obtain the reduced space). It is simply $$ \pi^{-1}(F) = P \times \{0\} \times \{0\} \subset \mu^{-1}(0) .$$ 

As in the linear version of real blow-up we replace $V^*(P) \times \mathbb{C}^2$ by $V^*(P) \times R$ with corresponding moment map $\mu'$.  The real blow up is defined as  $\mu'^{-1}(0)/S^1 \times S^1$ and the map $\Psi_-$ is induced by the map $Id \times \psi$ in the quotient (as above this means quotienting by $S^1 \times S^1$ to the local model and then by the $S^1$-action to the reduced space, we denote the composition of these quotient maps $\pi'$). It follows that: $$ \pi'^{-1}( \Psi_{-}^{-1} (F) ) =  P \times \{0\}  \times S^1 \subset \mu'^{-1}(0) ,$$ and  $\bar{\Psi}_{-}$ is induced over $F$ by the map $f: \pi'^{-1}( \Psi_{-}^{-1} (F) ) \rightarrow  \pi^{-1}(F)$ defined by $$(p,0,z) \mapsto (p,0,0) .$$  Finally, by direct calculation it is immediate to verify that $f$ induces a symplectomorphism over $F$.
\end{proof}

Departamento di Mathematica e Informatica ``U. Dini", Universit\`{a} Di  Firenze.\\

\noindent nicholas.lindsay@unifi.it.

\end{document}